\theoremstyle{plain} 
\newtheorem{thm}{Theorem}[section]
\newtheorem*{thm*}{Theorem}
\newtheorem{lem}[thm]{Lemma}
\newtheorem{prop}[thm]{Proposition}
\newtheorem*{conj*}{Conjecture}
\newtheorem{dfn}[thm]{Definition}
\theoremstyle{definition}
\newtheorem{eg}[thm]{Example}
\newtheorem{prob}[thm]{Problem}
\newtheorem{question}[thm]{Question}
\newtheorem{rem}[thm]{Remark}	
\theoremstyle{remark}
\newtheorem*{pf}{Proof}
\numberwithin{equation}{section}
\def\NN{{\mathbb N}}
\def\ZZ{{\mathbb Z}}
\def\QQ{{\mathbb Q}}
\def\RR{{\mathbb R}}
\def\CC{{\mathbb C}}
\def\PP{{\mathbb P}}
\def\PP{{\mathbb P}}
\def\A{{\mathcal A}}
\def\B{{\mathcal B}}
\def\D{{\mathcal D}}
\def\E{{\mathcal E}}
\def\F{{\mathcal F}}
\def\O{{\mathcal O}}
\def\S{{\mathcal S}}
\def\T{{\mathcal T}}
\newcommand{\per}{\mathrm{per}}
\newcommand{\ch}{\mathrm{ch}}
\newcommand{\iso}{\xrightarrow{\sim}}
\newcommand{\Aut}{\mathrm{Aut}}
\newcommand{\dgcat}{\mathbf{dgcat}_k}
\newcommand{\hodgcat}{\mathbf{hodgcat}_k}
\newcommand{\RHom}{\mathbf{R}\mathcal{H}om}
\newcommand{\Hom}{{\rm Hom}}
\newcommand{\Ext}{{\rm Ext}}
\def \mf#1#2#3#4{
\xymatrix{
{#1}\  \ar@<0.4ex>[r]^{{#2}} & \ {#4}
\ar@<0.4ex>[l]^{{#3}}
}
}
\def \mfs#1#2#3#4{\!
\xymatrix@C=1,5em{{#1} \! \ar@<0.2ex>[r]^{{#2}} & \! {#4}
\ar@<0.2ex>[l]^{{#3}}
}
\!}
\def \mfl#1#2#3#4{
\xymatrix@C=2.6em{{#1}\  \ar@<0.4ex>[r]^{{#2}} &\  {#4}
\ar@<0.2ex>[l]^{{#3}}
}
}
\def \mfss#1#2#3#4{\!
\xymatrix@C=1.5em{{#1} \ar@<0.3ex>[r]^{{#2}} & {#4}
\ar@<0.3ex>[l]^{{#3}}
}
\!}
\begin{document}
\title{Hochschild entropy and Categorical entropy}
\author{Kohei Kikuta}
\address{Department of Mathematics, Chuo University, Tokyo, 112-0003, Japan}
\email{kikuta@math.chuo-u.ac.jp}
\author{Genki Ouchi}
\address{Graduate School of Mathematics, Nagoya University, 
Furocho, Chikusaku, Nagoya, Japan, 464-8602}
\email{genki.ouchi@math.nagoya-u.ac.jp}
\begin{abstract}
We study the categorical entropy and counterexamples to Gromov--Yomdin type conjecture via homological mirror symmetry of K3 surfaces established by Sheridan--Smith. 
We introduce asymptotic invariants of quasi-endofunctors of dg categories, called the Hochschild entropy. 
It is proved that the categorical entropy is lower bounded by the Hochschild entropy. 
Furthermore, motivated by Thurston's classical result, we prove the existence of a symplectic Torelli mapping class of positive categorical entropy. 
We also consider relations to the Floer-theoretic entropy. 
\end{abstract}
\maketitle
\section{Introduction}
\subsection{Categorical entropy and Gromov--Yomdin type equality}
Let $X$ be a smooth projective variety over $\CC$, and $f\in\Aut(X)$ an automorphism of $X$. 
There is an important asymptotic invariant $h_{\mathrm{top}}(f)\in\RR_{\geq0}$ called the {\it topological entropy} of $f$,
which measures the complexity of the dynamical system $(X,f)$. 
As a categorical analogue of the topological entropy, for a triangulated category $\T$,  
Dimitrov--Haiden--Katzarkov--Kontsevich introduced the {\it categorical entropy} $h_{\mathrm{cat}}(\Phi)\in\RR_{\geq0}$ of an autoequivalence $\Phi\in\Aut(\T)$ \cite{DHKK}. 
These two entropies agree in the following cases: 
\begin{itemize}
\item
\cite{KT}: 
Let $\D^b(X)$ be the bounded derived category of coherent sheaves on $X$. 
For an automorphism $f$ of $X$, 
its pull-back $f^*\in\Aut(\D^b(X))$ satisfies 
$h_{\mathrm{top}}(f)=h_{\mathrm{cat}}(f^*).$
\item
\cite{DHKK}: 
Let $\Sigma_g$ be a connected oriented closed surface of genus $g\ge2$, and $\D\mathrm{Fuk}(\Sigma_g)$ the derived Fukaya category of $\Sigma_g$. 
For a pseudo-Anosov mapping class $\phi\in \mathrm{MCG}(\Sigma_g)$, its induced autoequivalence $\phi_*\in\Aut(\D\mathrm{Fuk}(\Sigma_g))$ satisfies
$h_{\mathrm{top}}(\phi)=h_{\mathrm{cat}}(\phi_*).$ 
\end{itemize}

In general, it is hard to compute the topological entropy of homeomorphisms, whereas that of automorphisms of algebraic varieties can be computed as the spectral radius of the induced action on the cohomology due to Gromov--Yomdin. 
\begin{thm}[Gromov--Yomdin \cite{Gro1,Gro2,Yom}]
For $f\in\Aut(X)$, one has 
\[
h_{\mathrm{top}}(f)=\log\rho(H^*(f)).
\]
\end{thm}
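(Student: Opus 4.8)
The plan is to prove the equality as two opposite inequalities, combining Gromov's upper bound with Yomdin's lower bound; the two halves meet through the volume growth of the iterated graphs of $f$, which for a holomorphic automorphism is a purely cohomological quantity.

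For the upper bound $h_{\mathrm{top}}(f)\le\log\rho(H^*(f))$, I would follow Gromov. Let $d=\dim_\CC X$, fix a Kähler form $\omega$, and consider the graphs $\Gamma_n\subset X^n$ of the map $x\mapsto(x,f(x),\dots,f^{n-1}(x))$, with $X^n$ carrying the product metric $\omega_n=\sum_{i=0}^{n-1}\pi_i^*\omega$. First I would invoke Gromov's general inequality $h_{\mathrm{top}}(f)\le\limsup_{n}\frac1n\log\mathrm{Vol}(\Gamma_n)$, a Crofton-type estimate valid for any $C^\infty$ self-map. Since $f$ is holomorphic, $\Gamma_n$ is a complex submanifold, so Wirtinger's theorem gives $\mathrm{Vol}(\Gamma_n)=\frac1{d!}\int_X\bigl(\sum_{i=0}^{n-1}(f^i)^*\omega\bigr)^d$. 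This integral expands into intersection numbers of the classes $(f^i)^*[\omega]\in H^{1,1}(X)$, hence depends only on the action of $f^*$ on cohomology, and a direct estimate shows its exponential growth rate equals $\log\rho\bigl(f^*|\bigoplus_p H^{p,p}(X)\bigr)$, which (by the identity recalled below) is $\log\rho(H^*(f))$.

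For the lower bound $h_{\mathrm{top}}(f)\ge\log\rho(H^*(f))$, I would invoke Yomdin's theorem resolving the Shub entropy conjecture for $C^\infty$ maps: $h_{\mathrm{top}}(f)\ge\log\rho\bigl(f_*|H_*(X;\RR)\bigr)$. As $f$ is holomorphic it is in particular $C^\infty$, so this applies, and Poincaré duality identifies the right-hand side with $\log\rho(H^*(f))$. Yomdin's argument reverses Gromov's: the homological expansion rate bounds from below the growth of volumes of images of a fixed cycle, and his quantitative $C^k$-reparametrization lemma in semialgebraic geometry converts this volume growth into topological entropy, ruling out the a priori possibility that homological growth is cancelled by folding that collapses volume.

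The main obstacle is the lower bound: Gromov's half reduces, once the volume-growth inequality is granted, to a transparent cohomological computation, whereas Yomdin's half genuinely requires the full strength of his delicate $C^k$-reparametrization estimates to exclude volume-collapsing folding. A secondary technical point, needed so that the two halves meet at the same number, is the Hodge-theoretic identity $\rho(H^*(f))=\rho\bigl(f^*|\bigoplus_p H^{p,p}(X)\bigr)$: since $f^*$ preserves the Hodge decomposition, a Hodge--Riemann and log-concavity argument for the dynamical degrees $\lambda_p=\rho(f^*|H^{p,p}(X))$ shows that the spectral radius on the full cohomology is already attained on the $(p,p)$-classes.
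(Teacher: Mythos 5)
Your proposal is correct and is precisely the argument of the sources the paper cites for this theorem (the paper itself gives no proof): Gromov's graph-volume/Wirtinger upper bound, Yomdin's $C^\infty$ resolution of the Shub entropy conjecture for the lower bound, and the dynamical-degree/log-concavity identity $\rho(H^*(f))=\rho\bigl(f^*|\bigoplus_p H^{p,p}(X)\bigr)$ to make the two bounds meet. Nothing further is needed.
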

\noindent
It is therefore natural to consider the categorical analogue of the Gromov--Yomdin theorem, that is the agreement between the categorical entropy and the spectral radius of the induced action on the numerical Grothendieck group of the derived category:
\[
h_{\mathrm{cat}}(\Phi)=\log\rho(K_{{\rm num}}(\Phi))~\text{ for }\Phi\in\Aut(\D^b(X)).
\]
This Gromov--Yomdin type equality is conjectured by the first author and Takahashi \cite{KT}, and proved for any autoequivalence of curves, abelian surfaces, simple abelian varieties and varieties with the ample (anti-)canonical sheaf \cite{Kik,Yos,KT}. 
This conjecture is however not true in general. 
Y.-W. Fan found, as the first counterexample, some autoequivalence of any Calabi--Yau hypersurfaces of even dimension greater than two \cite{Fan}. 
In the sequel the second author and Mattei construct counterexamples for surfaces \cite{Ouc,Mat}. 

\subsection{Counterexamples via Mirror symmetry}
In algebro-geometric viewpoint, it is natural to expect the categorical analogue of Gromov--Yomdin theorem. 
On the other hand, Thurston proved the existence of mapping classes of $\Sigma_g$ with positive topological entropy, trivially acting on the cohomology, hence in the Torelli subgroup of ${\rm MCG}(\Sigma_g)$ \cite{Thu}. 
Combining the above result by \cite{DHKK}, in symplecto-geometric viewpoint, we obtain an autoequivalence of $\D\mathrm{Fuk}(\Sigma_g)$ with positive categorical entropy, which is induced by a Torelli mapping class. 
We therefore expect the following. 
\begin{prob}\label{prob}
\begin{enumerate}
\item
Does there exist a symplectic Torelli mapping class of positive categorical entropy for Calabi--Yau manifolds?
\item
Does a symplectic Torelli mapping class as in (i) give a counterexample to Gromov--Yomdin type conjecture via the homological mirror symmetry?

\end{enumerate}
\end{prob}
These kinds of problems are one of Fan's motivations to find a counterexample to the conjecture as in \cite[Section 1]{Fan}. 

\subsection{Floer-theoretic entropy}
To study dynamics of mapping classes of closed surfaces via Floer theory, Fel'shtyn introduced the asymptotic invariant defined to be the exponential growth rate of the dimensions of the fixed-point Floer cohomology of $\phi\in{\rm MCG}(\Sigma_g)$
\[
h_{\mathrm{Floer}}(\phi)
:=\limsup_{n\to\infty}\frac{1}{n}\log\dim_\Lambda HF^*(\phi^n),
\]
see \cite{Fel2}, and Smith also considered the same invariant which he called the {\it Floer-theoretic entropy} \cite{Smi1}. 
For pseudo-Anosov mapping class, its Floer-theoretic entropy coincides with the topological entropy \cite{Fel4}. 

It is interesting to compare the Floer-theoretic entropy and the categorical entropy, which is a large motivation for this paper. 

\subsection{Main results}
Motivated by the comparison between the Floer-theoretic entropy and the categorical entropy,
we introduce the asymptotic invariants given by the exponential growth rate of dimensions of Hochschild (co)homology of quasi-functors, which we call the {\it Hochschild (co)homological entropy}: 
Let $\A$ be a smooth proper differential graded (dg) category, and $\per_{\mathrm{dg}}(\A)$ its perfect derived dg category. 
For a quasi-endofunctor $\widetilde{\Phi}$ of $\per_{\mathrm{dg}}(\A)$, 
we define
\begin{eqnarray*}
h_{HH^*}(\widetilde{\Phi})&:=&\limsup_{n \to \infty}\frac{1}{n}\log \mathrm{dim}_kHH^*(\widetilde{\Phi}^n),\\
h_{HH_*}(\widetilde{\Phi})&:=&\limsup_{n \to \infty}\frac{1}{n}\log \mathrm{dim}_kHH_*(\widetilde{\Phi}^n). 
\end{eqnarray*}
These two entropies agree for Calabi--Yau dg categories. 
As Smith observed in \cite[Corollary 3.12]{Smi1}, 
the fixed-point Floer cohomology is sometimes isomorphic to the Hochschild cohomology. 
Therefore, in the first step, we replace the Floer-theoretic entropy by the Hochschild entropy. 

In this paper, we firstly compare the Hochschild entropy and the categorical entropy.
\begin{thm}[Theorem \ref{HH-cat}]
For a quasi-endofunctor $\widetilde{\Phi}$ of $\per_{\mathrm{dg}}(\A)$ and $\Phi:=H^0(\widetilde{\Phi})$, 
we have 
\[
h_{HH^*}(\widetilde{\Phi}) \leq h_{\mathrm{cat}}(\Phi)
\text{ and }
h_{HH_*}(\widetilde{\Phi}) \leq h_{\mathrm{cat}}(\Phi).
\]
\end{thm}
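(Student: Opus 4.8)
The plan is to bound both Hochschild entropies by the exponential growth rate of $\dim_k \Hom^*(G, \Phi^n G)$ for a split generator $G$ of $\per_{\mathrm{dg}}(\A)$, and then to bound that growth rate by $h_{\mathrm{cat}}(\Phi)$ directly from the complexity definition of categorical entropy. The essential tool used twice is the following \emph{generator estimate}: writing $\mathrm{End}^*(G)=\bigoplus_i\Hom_{\per(\A)}(G,G[i])$, which is finite-dimensional since $\A$ is proper, and unwinding the DHKK complexity $\delta_0(G,F)$ as the minimal number of shifted copies of $G$ needed to build $F$ (up to direct summands) by iterated cones, one applies the cohomological functor $\Hom^*(G,-)$ and the subadditivity of $\dim_k$ along exact triangles to get $\dim_k\Hom^*(G,F)\le \delta_0(G,F)\cdot\dim_k\mathrm{End}^*(G)$. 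Taking $F=\Phi^nG$, dividing by $n$ and letting $n\to\infty$ yields $\limsup_n\frac1n\log\dim_k\Hom^*(G,\Phi^nG)\le h_{\mathrm{cat}}(\Phi)$, since $h_{\mathrm{cat}}(\Phi)=\lim_n\frac1n\log\delta_0(G,\Phi^nG)$.

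Next I would translate the Hochschild invariants into $\Hom$- and $\otimes$-spaces of bimodules. Let $\Gamma$ be the $\A$-$\A$-bimodule representing $\widetilde\Phi$, so that its $n$-fold derived tensor power $\Gamma_n$ represents $\widetilde\Phi^n$, and set $\A^e:=\A\otimes_k\A^{\mathrm{op}}$ with diagonal bimodule $\Delta$. Then $HH^*(\widetilde\Phi^n)\cong\Hom^*_{\per_{\mathrm{dg}}(\A^e)}(\Delta,\Gamma_n)$ and $HH_*(\widetilde\Phi^n)\cong\Delta\otimes^L_{\A^e}\Gamma_n$. Since $\A$ is smooth, $\Delta$ is perfect over $\A^e$, and since $\A$ is proper, $\G:=G\boxtimes G^\vee$ is a split generator of $\per_{\mathrm{dg}}(\A^e)$; hence $N_0:=\delta_0(\G,\Delta)<\infty$ is a constant independent of $n$. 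Applying the same generator estimate inside $\per_{\mathrm{dg}}(\A^e)$ — now to the contravariant functor $\Hom^*(-,\Gamma_n)$ and to the exact functor $-\otimes^L_{\A^e}\Gamma_n$ — gives $\dim_k HH^*(\widetilde\Phi^n)\le N_0\,\dim_k\Hom^*_{\A^e}(\G,\Gamma_n)$ and $\dim_k HH_*(\widetilde\Phi^n)\le N_0\,\dim_k(\G\otimes^L_{\A^e}\Gamma_n)$. A Yoneda computation identifies $\Hom^*_{\A^e}(G\boxtimes G^\vee,\Gamma_n)\cong\Gamma_n(G,G)=\Hom^*(G,\Phi^nG)$, and dually $(G\boxtimes G^\vee)\otimes^L_{\A^e}\Gamma_n$ has the same finite $k$-dimension (using properness). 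Combining with the first step yields $h_{HH^*}(\widetilde\Phi),\,h_{HH_*}(\widetilde\Phi)\le\limsup_n\frac1n\log\dim_k\Hom^*(G,\Phi^nG)\le h_{\mathrm{cat}}(\Phi)$.

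The main obstacle is the bimodule bookkeeping of the second step: fixing the kernel formalism so that $\widetilde\Phi^n$ is represented by $\Gamma_n$ with $HH^*,HH_*$ as stated, verifying that $\G=G\boxtimes G^\vee$ generates $\per_{\mathrm{dg}}(\A^e)$ and that smoothness forces $\delta_0(\G,\Delta)<\infty$, and executing the Yoneda and duality identifications with the correct variance so that both $\Hom^*_{\A^e}(\G,\Gamma_n)$ and $\G\otimes^L_{\A^e}\Gamma_n$ reduce to $\Hom^*(G,\Phi^nG)$. Everything else amounts to the single complexity estimate applied once in $\per_{\mathrm{dg}}(\A)$ and once in $\per_{\mathrm{dg}}(\A^e)$, so once these identifications are in place the two inequalities follow in a uniform way for $HH^*$ and $HH_*$.
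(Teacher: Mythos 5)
Your proposal is correct and is essentially the paper's own argument: the paper likewise split-generates the functor category $H^0(\RHom(\D,\D))\simeq\per(\A^{\mathrm{op}}\otimes^{\mathbf{L}}\A)$ by $G\otimes\mathbf{R}\Hom(G,-)$ (your $G\boxtimes G^\vee$), uses finiteness of its complexity-distance to $\mathrm{id}_\D$ (the diagonal bimodule, finite by smoothness and properness), and reduces everything to a constant multiple of $\delta_0(G,\Phi^n(G))$, with your Yoneda identification $\Hom^*_{\A^e}(G\boxtimes G^\vee,\Gamma_n)\simeq\Hom^*(G,\Phi^n(G))$ appearing there as the isomorphism $\widetilde\Phi^n\circ(G\otimes\mathbf{R}\Hom(G,-))\simeq\Phi^n(G)\otimes\mathbf{R}\Hom(G,-)$ combined with the DHKK complexity inequalities. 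The only discrepancy is cosmetic: the paper defines $HH^*(\widetilde\Phi)=\Ext^*(\widetilde\Phi,\mathrm{id}_\D)$, i.e.\ maps \emph{into} the diagonal rather than out of it, so your two applications of the generator estimate should be run with the opposite variance (or composed with a fixed Serre functor), which changes none of the bounds.
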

We furthermore expect the agreement between the Floer-theoretic entropy and the Hochschild entropy for symplectic K3 surfaces, see Section 6 where we give Question \ref{Floer-HH} and propose how to prove it.  

We then consider Problem \ref{prob} for K3 surfaces. 
Let $\check{X}_0$ be the complex K3 surface called {\it mirror quartic}, defined as the crepant resolution of the quotient of the Fermat quartic hypersurface in $\PP^3_{\CC}$ by $\ZZ/4\times\ZZ/4$. 
Sheridan--Smith proved the homological mirror symmetry for Greene--Plesser mirrors \cite{Smi1}. 
Combining with a result in \cite{Smi2}, as a special case of Greene--Plesser mirrors, one has the mirror equivalence for the mirror quartic (see Section 5 for details): 
There exists a K\"ahler form $\omega$ on $\check{X}_0$ and a quartic K3 surface $X$
of Picard rank one, which satisfies a quasi-equivalence of
$A_{\infty}$ categories
\[
{\mathfrak D}{\rm Fuk}(\check{X}_0,\omega)
\simeq\per_{\mathrm{dg}}(X). 
\]
Here ${\mathfrak D}{\rm Fuk}(\check{X}_0,\omega)$ is the split-closed triangulated envelope of the Fukaya category of $(\check{X}_0,\omega)$ and 
$\per_{\mathrm{dg}}(X)$ is the dg category of perfect complexes on $X$, which is a dg enhancement of $\D^b(X)$. 
Bayer--Bridgeland proved Bridgeland conjecture for complex algebraic K3 surfaces of Picard rank one \cite{BB}
and, as a corollary, 
identified the subgroup of Calabi--Yau autoequivalences of $\D^b(X)$, modulo even shifts, and the stacky fundamental group of K\"ahler moduli space of $X$. 
Based on \cite{SS1} and \cite{BB}, 
Sheridan--Smith furthermore developed the relation between the symplectic mapping class groups and the autoequivalence groups \cite{SS2}. 
Using their results, we prove that the counterexample to Gromov--Yomdin type conjecture on $X$ in \cite{Ouc} induces a symplectic Torelli mapping class on $(\check{X_0},\omega)$ as in Problem \ref{prob} (i).

\begin{thm}[Theorem \ref{exist-sMCG}]\label{intro-exist-sMCG}
Let $(\check{X_0},\omega)$ be the mirror quartic. 
Then there exists a symplectic Torelli mapping class $\phi$ of $\check{X}_0$ satisfying
\[
h_{\mathrm{cat}}(\phi)>\log\rho(H^2(\phi))=0. 
\]
\end{thm}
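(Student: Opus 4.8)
The plan is to produce a symplectic Torelli mapping class $\phi$ of $(\check{X}_0,\omega)$ with strictly positive categorical entropy by transporting everything through the mirror equivalence ${\mathfrak D}{\rm Fuk}(\check{X}_0,\omega)\simeq\per_{\mathrm{dg}}(X)$ and exhibiting the relevant autoequivalence on the algebraic side. Since a quasi-equivalence of $A_\infty$ categories induces an equivalence of the split-closed triangulated envelopes, and since categorical entropy is a conjugation invariant preserved under such equivalences, it suffices to find an autoequivalence $\Phi\in\Aut(\D^b(X))$ with $h_{\mathrm{cat}}(\Phi)>0$ that (a) acts trivially on the relevant cohomology, so the mirror mapping class is Torelli, and (b) corresponds under the Sheridan--Smith dictionary to a genuine symplectic mapping class of $\check{X}_0$. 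The natural candidate is a spherical twist $T_{\O_X}$, or a suitable composite, which is known to induce the identity (or a finite-order action) on $H^*$ while having positive categorical entropy.

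First I would make precise the input from the quoted counterexample literature: by Fan, Ouchi, and Mattei there exist autoequivalences of $\D^b(X)$ for K3 surfaces $X$ of Picard rank one violating the Gromov--Yomdin type equality, i.e.\ with $h_{\mathrm{cat}}(\Phi)>\log\rho(K_{{\rm num}}(\Phi))$; for an appropriate choice (e.g.\ a spherical twist or a composite lying in the kernel of the action on the Mukai lattice modulo shift) we get $h_{\mathrm{cat}}(\Phi)>0$ while the induced action on cohomology is trivial. The precise algebraic statement I would invoke is that the subgroup of Calabi--Yau autoequivalences acting trivially on $\widetilde{H}(X,\ZZ)$ (modulo even shifts) is nontrivial and contains elements of positive categorical entropy; this is exactly the content that Bayer--Bridgeland's identification of the autoequivalence group with the stacky fundamental group of the K\"ahler moduli makes accessible.

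Next I would run this $\Phi$ through the Sheridan--Smith correspondence \cite{SS2} between symplectic mapping class groups and autoequivalence groups: their result produces, from an autoequivalence in the image of the mirror map, a symplectic mapping class $\phi\in\pi_0\mathrm{Symp}(\check{X}_0,\omega)$ whose induced autoequivalence $\phi_*$ is conjugate to $\Phi$ under the mirror equivalence. The triviality of the $H^*$-action on the algebraic side, transported through the compatibility of the mirror map with the Hodge-theoretic/lattice isometries, forces $\rho(H^2(\phi))=1$, hence $\log\rho(H^2(\phi))=0$ and $\phi$ is Torelli. Meanwhile categorical entropy is invariant under the quasi-equivalence, so
\[
h_{\mathrm{cat}}(\phi)=h_{\mathrm{cat}}(\phi_*)=h_{\mathrm{cat}}(\Phi)>0,
\]
which yields the desired strict inequality $h_{\mathrm{cat}}(\phi)>\log\rho(H^2(\phi))=0$.

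The main obstacle, and the step requiring genuine care, is verifying that the chosen counterexample autoequivalence $\Phi$ actually lies in the image of the Sheridan--Smith homomorphism and comes from an honest symplectic mapping class, rather than merely an abstract autoequivalence. This requires matching the subgroup of autoequivalences hit by symplectomorphisms (as identified via Bayer--Bridgeland and \cite{SS2}) against the specific element produced on the algebraic side, and checking that the positive-entropy element can be realized there; one must confirm both that $\Phi$ respects the relevant stability/Bridgeland structure so that the \cite{SS2} dictionary applies, and that the resulting $\phi$ is genuinely in the Torelli subgroup and not merely acting by a root of unity. A secondary technical point is ensuring that the positivity of $h_{\mathrm{cat}}(\Phi)$ is preserved exactly—not just up to the known lower bounds—under passage to the split-closed envelope, which follows from the invariance of categorical entropy under triangulated equivalences but should be stated explicitly.
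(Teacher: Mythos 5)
Your overall architecture coincides with the paper's: take a Gromov--Yomdin counterexample autoequivalence on the algebraic side of the mirror, observe that it is cohomologically trivial, and realize it by a symplectic Torelli mapping class via the Sheridan--Smith dictionary. But the proposal has two genuine gaps, one of which is exactly the step you defer. The step you call ``the main obstacle'' --- verifying that the chosen $\Phi$ lies in the image of the symplectic mapping class group and that the resulting class is Torelli --- is the crux of the whole proof, and you describe it as a verification task without closing it. The paper closes it by quoting a precise surjectivity statement of Sheridan--Smith (\cite[formula (52) in Proposition 7.8]{SS2}): for the mirror quartic, the composite
\[
I(\check{X},\omega)\longrightarrow\Aut^0({\mathfrak D}{\rm Fuk}(\check{X},\omega))/[2]\xrightarrow{\ \sim\ }\Aut^0(\D^b(X))/[2]
\]
is surjective, where $\Aut^0$ denotes the cohomologically (Hochschild-homologically) trivial autoequivalences. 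Since this surjection starts from the Torelli subgroup $I(\check{X},\omega)$ and hits \emph{every} element of $\Aut^0(\D^b(X))/[2]$, there is no subgroup-matching or stability-condition compatibility left to check, and Torelli-ness of $\phi$ is not something to be transported back through lattice isometries --- it holds by construction, which is what gives $\log\rho(H^2(\phi))=0$ for free. Without invoking this result (which rests on Bayer--Bridgeland plus the symplectic monodromy construction), your argument never actually produces $\phi$.

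The second gap is the field of definition. You apply the counterexamples of Fan, Ouchi and Mattei directly to $X$, but the mirror partner of $(\check{X}_0,\omega_\lambda)$ is a quartic over the Novikov field $\Lambda$, whereas those counterexamples are established for \emph{complex} K3 surfaces. The paper bridges this by (i) working on a complex model $X_\CC$ of $X$, where Ouchi's $\Phi_0=(T_{\O_X}\circ(-\otimes\O_X(-1)))^4$ has $h_{\mathrm{cat}}(\Phi_0)>0$ and $\Phi_0^H=\mathrm{id}$ (the latter via Orlov's graded matrix factorization equivalence, following Fan), and (ii) proving that categorical entropy of Fourier--Mukai functors is invariant under base change of fields (Theorem \ref{base-change}) and that base change identifies $\Aut^0(\D^b(X_\CC))\simeq\Aut^0(\D^b(X))$ (Lemma \ref{cpx-model-aut}). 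Your proposal silently conflates $\CC$ and $\Lambda$; this is not a cosmetic point, since it is precisely why the paper develops its base change machinery. Finally, a smaller but real slip: the bare spherical twist $T_{\O_X}$ is not a viable candidate, since it acts on the Mukai lattice as a nontrivial reflection (so it is not in $\Aut^0$ and cannot correspond to a Torelli class), and on a K3 surface its categorical entropy vanishes by Ouchi's results; only composites such as $(T_{\O_X}\circ(-\otimes\O_X(-1)))^4$ combine positive entropy with trivial cohomological action.
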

By the construction, the symplectic Torelli mapping class $\phi$ in Theorem \ref{intro-exist-sMCG} answers Problem \ref{prob} (ii). 

The homological mirror symmetry is a quasi-equivalence of some $A_{\infty}$ categories over the universal Novikov field $\Lambda$, not over $\CC$. 
To estimate the Hochschild and categorical entropy over $\Lambda$ via complex models, 
we establish the base change formula for field extensions in general settings, see Theorem \ref{base-change}. 

\bigskip
\noindent
{\bf Acknowledgements.}
K.K. is indebted to Ivan Smith for explaining a strategy to prove Question \ref{Floer-HH}. 
The authors would like to thank his Atsushi Takahashi for useful discussions. 
K.K. is partially supported by JSPS KAKENHI Grant Number 20K22310. 
G.O is supported by JSPS KAKENHI Grant Number 19K14520 and Interdisciplinary Theoretical and Mathematical Sciences Program (iTHEMS) in RIKEN.

\bigskip
\noindent
{\bf Notation and Convention.} 
Throughout this paper, for a smooth projective variety $X$ over a field $k$, the bounded derived category of coherent sheaves on $X$ is denoted by $\D^b(X)$. 
For an object $\E \in \D^b(X\times Y)$, the Fourier--Mukai functor $\Phi_\E$ associated with the Fourier--Mukai kernel $\E$ is defined by 
\[\Phi_\E: \D^b(X) \to \D^b(Y), E \mapsto \mathbf{R}p_*(q^*E \otimes^{\mathbf{L}} \E), \]
where $p:X \times Y \to Y$ and $q: X \times Y \to X$ are projections. 
If $\Phi:\D^b(X) \to \D^b(Y)$ is an equivalence, there uniquely exists a Fourier--Mukai kernel $\E \in \D^b(X \times Y)$ such that $\Phi \simeq \Phi_\E$ by {\cite[Theorem 2.19]{Orl1}}. 
For $\E \in \D^b(X\times Y)$ and $\F \in \D^b(Y \times Z)$, 
we define the convolution product 
\[\F \circ \E:=\mathbf{R}\pi_{{XZ}*}(\pi^*_{XY}\E \otimes^{\mathbf{L}}\pi^*_{YZ}\F) \in \D^b(X \times Z), \]
where $\pi_{XY}, \pi_{XZ}$ and $\pi_{YZ}$ are projections from $X \times Y \times Z$ to $X \times Y$, $X \times Z$ and $Y \times Z$ respectively.
By \cite[Proposition 5.10]{Huy1}, we have $\Phi_\F \circ \Phi_\E \simeq \Phi_{\F \circ \E}$. 

Let $\per_{\mathrm{dg}}(X)$ be the dg category of perfect complexes on $X$. Since $X$ is smooth, we have the exact equivalence $H^0(\per_{\mathrm{dg}}(X)) \simeq \D^b(X)$, that is, $\per_{\mathrm{dg}}(X)$ is a dg enhancement of $\D^b(X)$. 

For a subring $K\subset\CC$ and a finitely generated free $K$-module $V$, define the spectral radius of a $K$-endomorphism $f:V \to V$ by $\rho(f):=\rho(f_\CC)$.
We set $\log0:=-\infty$.
\section{Entropy}
\subsection{Categorical entropy}
Let $\T$ be a triangulated category over a field $k$. For an object $E \in \T$, let $\langle E \rangle$ be the smallest full triangulated subcategory that contains $E$ and is closed under taking direct summands.

\begin{dfn}
Let $E, G$ be objects in $\T$ such that $E \in \langle G \rangle$. The complexity $\delta_t(G,E)$ of $E$ with respect to $G$ is the function  $\delta_t(G,E):\RR \to [0,+\infty)$ defined by
\begin{equation*}
\delta_t(G,E)
:=
\inf\left\{
\displaystyle\sum_{i=1}^p e^{n_i t}~
\middle
|~
\begin{xy}
(0,5) *{0}="0", (20,5)*{E_{1}}="1", (30,5)*{\dots}, (40,5)*{E_{p-1}}="k-1", (60,5)*{E\oplus E'}="k",
(10,-5)*{G[n_{1}]}="n1", (30,-5)*{\dots}, (50,-5)*{G[n_{p}]}="nk",
\ar "0"; "1"
\ar "1"; "n1"
\ar@{.>} "n1";"0"
\ar "k-1"; "k" 
\ar "k"; "nk"
\ar@{.>} "nk";"k-1"
\end{xy}
\right\}.
\end{equation*}
\end{dfn}

We collect fundamental inequalities.

\begin{prop}[{\cite[Proposition 2.2]{DHKK}}]\label{complexity ineq}
The following are hold.
\begin{itemize}
\item[(1)] Let $G_1, G_2, G_3$ be objects in $\T$ such that $G_2, G_3 \in \langle G_1 \rangle, G_3 \in \langle G_2 \rangle$. We have the inequality
\[\delta_t(G_1,G_3)\leq \delta_t(G_1,G_2)\delta_t(G_2,G_3). \]
\item[(2)] Let $E, G$ be an object in $\T$ such that $E \in \langle G \rangle$. For a non-zero exact functor $\Phi: \T \to \T'$ to a triangulated category $\T'$, we have the inequality
\[\delta_t(\Phi(G),\Phi(E)) \leq \delta_t(G,E).\]
\end{itemize}
\end{prop}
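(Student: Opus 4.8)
The plan is to reduce both parts to three elementary properties of the complexity function $\delta_t(G,-)$ and then assemble them. I would first record two easy inputs. \emph{Shift-compatibility}: applying $[m]$ to a filtration of $E\oplus E'$ with cones $G[n_i]$ yields a filtration of $E[m]\oplus E'[m]$ with cones $G[n_i+m]$, whose weights scale by $e^{mt}$; since the reverse follows by shifting back, this gives $\delta_t(G,E[m])=e^{mt}\delta_t(G,E)$. \emph{Summand-monotonicity}: a filtration realizing $(E\oplus E')\oplus W=E\oplus(E'\oplus W)$ from shifts of $G$ also exhibits $E$ as a summand, so $\delta_t(G,E)\le\delta_t(G,E\oplus E')$.

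Part (2) is then immediate. Given a filtration $0=E_0\to E_1\to\cdots\to E_p=E\oplus E'$ with cones $G[n_i]$, apply the exact functor $\Phi$. Since $\Phi$ preserves exact triangles, commutes with the shift, and is additive, the image $0=\Phi(E_0)\to\cdots\to\Phi(E_p)=\Phi(E)\oplus\Phi(E')$ is a filtration of $\Phi(E)\oplus\Phi(E')$ with cones $\Phi(G)[n_i]$ and identical weight $\sum_i e^{n_i t}$. Taking the infimum over filtrations of $E$ gives $\delta_t(\Phi(G),\Phi(E))\le\delta_t(G,E)$; the hypothesis that $\Phi$ be non-zero only serves to guarantee $\Phi(G)\ne0$, so that $\Phi(E)\in\langle\Phi(G)\rangle$ makes sense.

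The real content is a sub-additivity inequality, which I would isolate as a lemma: if $A\to B\to C\to A[1]$ is an exact triangle with $A,C\in\langle G\rangle$, then $B\in\langle G\rangle$ and $\delta_t(G,B)\le\delta_t(G,A)+\delta_t(G,C)$. Fix filtrations $0=A_0\to\cdots\to A_p=A\oplus A'$ and $0=C_0\to\cdots\to C_q=C\oplus C'$ with cones $G[a_i]$ and $G[c_j]$. Adding the split pieces to $A\to B\xrightarrow{g}C$ produces an exact triangle $A\oplus A'\to B\oplus A'\oplus C'\to C\oplus C'\to(A\oplus A')[1]$, where the middle map is $(g,0,\mathrm{id})$ and its cone is $(A\oplus A')[1]$. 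I would then lift the fixed filtration $C_\bullet$ of the quotient $C\oplus C'$ through the middle term: repeated use of the octahedral axiom produces objects $D_j$ with $D_0=A\oplus A'$, $D_q=B\oplus A'\oplus C'$, and cones $\mathrm{Cone}(D_{j-1}\to D_j)\cong G[c_j]$. Concatenating the $A$-filtration with this lift builds $B\oplus A'\oplus C'$ from $G[a_1],\dots,G[a_p],G[c_1],\dots,G[c_q]$, so $\delta_t(G,B)\le\delta_t(G,B\oplus A'\oplus C')\le\sum_i e^{a_it}+\sum_j e^{c_jt}$; infima over the two filtrations prove the lemma.

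Finally, part (1) follows by iterating the lemma along a near-optimal filtration. Choose $0=F_0\to\cdots\to F_q=G_3\oplus G_3'$ with cones $G_2[m_j]$ and $\sum_j e^{m_jt}\le\delta_t(G_2,G_3)+\epsilon$. As $G_2\in\langle G_1\rangle$, each $G_2[m_j]\in\langle G_1\rangle$ with $\delta_t(G_1,G_2[m_j])=e^{m_jt}\delta_t(G_1,G_2)$ by shift-compatibility, so induction on the triangles $F_{j-1}\to F_j\to G_2[m_j]$ via the lemma gives $\delta_t(G_1,G_3\oplus G_3')\le\sum_j e^{m_jt}\,\delta_t(G_1,G_2)\le(\delta_t(G_2,G_3)+\epsilon)\,\delta_t(G_1,G_2)$. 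Combining with $\delta_t(G_1,G_3)\le\delta_t(G_1,G_3\oplus G_3')$ and letting $\epsilon\to0$ yields $\delta_t(G_1,G_3)\le\delta_t(G_1,G_2)\delta_t(G_2,G_3)$. I expect the main obstacle to be the lemma's octahedral filtration-lifting step: verifying that the quotient's filtration lifts with exactly the cones $G[c_j]$ while bookkeeping the auxiliary summands $A',C'$ that the up-to-summand definition of $\langle\cdot\rangle$ forces upon us.
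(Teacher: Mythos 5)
Your proof is correct. The paper itself offers no argument for this proposition—it is quoted verbatim from \cite{DHKK}—and your proof is precisely the standard one behind that citation: for (2) apply the exact functor termwise to a filtration, and for (1) refine a near-optimal $G_2$-filtration of $G_3$ by $G_1$-filtrations, where your subadditivity lemma (octahedral lifting of a filtration through a triangle, plus the shift-compatibility and direct-summand bookkeeping that the up-to-summands definition forces) supplies exactly the details that \cite{DHKK} leave implicit.
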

An object $G \in \T$ is a split generator if $\T=\langle G \rangle$.
We recall the definition of the categorical entropy of autoequivalences of $\T$.

\begin{dfn}[{\cite[Definition 2.4, Lemma 2.5]{DHKK}}]
Let $G$ and $G'$ be split generators of $\T$. For an autoequivalence $\Phi:\T \to \T$,
the categorical entropy $h_t(\Phi)$ is defined by 
\begin{equation*}
h_{t}(\Phi):=\displaystyle\lim_{n\rightarrow\infty}\frac{1}{n}\log \delta_t(G,\Phi^{n}(G')).
\end{equation*}
The limit exists in $[-\infty,+\infty)$ for any $t \in \RR$ and is independent of the choices of $G$ and $G'$. 
Denote the value $h_0(\Phi)\geq 0$ at $t=0$ by $h_{\mathrm{cat}}(\Phi)$. 
\end{dfn}

Let $\A$ be a smooth proper differential graded (dg) category over a field $k$. 
Denote the perfect derived category of $\A$ by $\per(\A)$. Since $\A$ is proper, the triangulated category $\per(\A)$ is Ext-finite over $k$, and there is a split generator of $\per(\A)$. 
Let $\T$ be a triangulated category equivalent to $\per(\A)$ for some smooth proper dg category $\A$ over a field $k$. 
In this case, the categorical entropy can be computed by the following theorem.


\begin{thm}[{\cite[Theorem 2.6]{DHKK}}]\label{entropy ext}
Let $G$ and $G'$ be split generators of $\T$. For an exact endofunctor $\Phi:\T \to \T$ such that $\Phi^n$ is not zero for any $n>0$, we have
\begin{equation*}
h_{t}(\Phi)=\displaystyle\lim_{n\rightarrow\infty}\frac{1}{n}\log \delta'_t(G,\Phi^{n}(G')).
\end{equation*}
Here, $\delta'_t(E,F):=\sum_{i \in \ZZ}\dim_k\Ext^i(E,F)e^{-it}$ for objects $E,F \in \T$.
\end{thm}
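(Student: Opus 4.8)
The plan is to deduce the Gromov--Yomdin-free formula by comparing the two complexities $\delta_t(G,-)$ and $\delta'_t(G,-)$ directly on the sequence $E_n:=\Phi^n(G')$ and then passing to exponential growth rates. We already know that the limit $h_t(\Phi)=\lim_n\tfrac1n\log\delta_t(G,\Phi^nG')$ exists in $[-\infty,+\infty)$ and is independent of the choice of split generators, so it suffices to sandwich $\delta'_t(G,E_n)$ between $\delta_t(G,E_n)$ and a subexponential multiple of it.

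First I would establish the easy inequality $\delta'_t(G,E)\le \delta'_t(G,G)\,\delta_t(G,E)$. The function $F\mapsto\delta'_t(G,F)$ is subadditive along exact triangles, since the long exact sequence of $\Hom(G,-)$ gives $\dim_k\Ext^i(G,B)\le\dim_k\Ext^i(G,A)+\dim_k\Ext^i(G,C)$ for any triangle $A\to B\to C\to A[1]$; moreover it is additive on direct sums, monotone under passage to summands, and satisfies $\delta'_t(G,G[n])=e^{nt}\delta'_t(G,G)$. Feeding an arbitrary cone decomposition of $E\oplus E'$ by the $G[n_i]$ into these properties and taking the infimum yields the claim. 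Dividing by $n$ and taking $\limsup$ then gives $\limsup_n\tfrac1n\log\delta'_t(G,\Phi^nG')\le h_t(\Phi)$.

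The substance of the theorem is the reverse estimate, of the form $\delta_t(G,E)\le C_t\,\delta'_t(G,E)$ with $C_t$ independent of the object along the sequence. Here smoothness of $\A$ is crucial: $\per(\A)$ is strongly generated, so $G$ may be taken to have finite generation time $N$, meaning every object is a summand of one built from finite sums of shifts of $G$ in at most $N$ cones. Concretely I would construct a Postnikov (semifree) resolution of $E$ layer by layer, where the $j$-th layer is a direct sum of copies of shifted $G$ whose multiplicities and shifts are read off from $\Ext^\bullet(G,-)$ of the preceding cone; the weighted count $\sum_i e^{n_it}$ of the shifted generators so produced is then to be bounded by a fixed multiple of $\sum_i\dim_k\Ext^i(G,E)e^{-it}=\delta'_t(G,E)$. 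Taking the infimum over such resolutions bounds $\delta_t(G,E)$, and because the number of layers is at most $N$, independent of $E$, the constant $C_t$ is uniform along the sequence.

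Combining the two estimates gives $\tfrac1n\log\delta_t(G,\Phi^nG')-\tfrac1n\log C_t\le\tfrac1n\log\delta'_t(G,\Phi^nG')\le\tfrac1n\log\delta'_t(G,G)+\tfrac1n\log\delta_t(G,\Phi^nG')$, so letting $n\to\infty$ forces $\lim_n\tfrac1n\log\delta'_t(G,\Phi^nG')$ to exist and equal $h_t(\Phi)$. The main obstacle is the reverse estimate: one must guarantee that the resolution terminates after boundedly many layers (this is exactly where smoothness, i.e.\ finite Rouquier dimension, enters) and, more delicately, that the multiplicities of the shifted generators in each layer are controlled by the honest $\Ext$-dimensions $\dim_k\Ext^i(G,E)$ despite the cancellations occurring in the long exact sequences; without such control $\delta'_t$ could be far smaller than the block sizes appearing in the resolution, and the inequality would fail.
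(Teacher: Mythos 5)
Your first inequality is sound: subadditivity of $\dim_k\Ext^i(G,-)$ along exact triangles, additivity on direct sums, monotonicity under summands, and $\delta'_t(G,G[n])=e^{nt}\delta'_t(G,G)$ do give $\delta'_t(G,E)\le\delta'_t(G,G)\,\delta_t(G,E)$ and hence the upper bound $\limsup_n\tfrac1n\log\delta'_t(G,\Phi^nG')\le h_t(\Phi)$. The genuine gap is in the reverse estimate, and you have in effect named it yourself without closing it. Strong generation in time $N$ asserts the \emph{existence} of some tower of length at most $N$ for each object; it does not say that your layer-by-layer Postnikov construction, where each layer is read off from $\Ext^\bullet(G,-)$ of the previous cone, terminates after $N$ steps, nor that the multiplicities in \emph{any} length-$N$ tower are bounded by the numbers $\dim_k\Ext^i(G,E)$. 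Killing $\Ext^*(G,-)$ stage by stage can strictly enlarge the Ext-groups of the successive cones, so the weighted count $\sum_i e^{n_it}$ can grow multiplicatively across layers with no a priori comparison to $\delta'_t(G,E)$; the cancellations in the long exact sequences work against you, exactly as your final sentence concedes. As written, the proposal therefore reduces the theorem to an unproved multiplicity bound, which is the whole content of the reverse inequality.

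Note that the paper gives no proof of this statement; it quotes it as \cite[Theorem 2.6]{DHKK}, and the proof there uses a different mechanism which is the idea you are missing: one never resolves $E=\Phi^n(G')$ directly. Smoothness of $\A$ means the diagonal bimodule is perfect over $\A^{\mathrm{op}}\otimes^{\mathbf L}\A$, hence split-generated by $G^\vee\otimes G$, so one fixes \emph{once and for all} a tower of length $p$ with layers $(G^\vee\otimes G)[n_j]$ converging to the diagonal up to summands. Convolution with $E$ sends $G^\vee\otimes G$ to $\mathbf{R}\Hom(G,E)\otimes_k G$ and the diagonal to $E$; since over a field $\mathbf{R}\Hom(G,E)\otimes_k G\simeq\bigoplus_i\Ext^i(G,E)\otimes_k G[-i]$, this produces a tower for $E\oplus E'$ of the fixed length $p$ whose layers are sums of shifts of $G$ with multiplicities \emph{exactly} $\dim_k\Ext^i(G,E)$ (properness guarantees these are finite). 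Hence $\delta_t(G,E)\le C(t)\,\delta'_t(G,E)$ with $C(t)=\sum_{j=1}^p e^{n_jt}$ depending only on the diagonal resolution, uniformly in $n$. With this one-time resolution of the diagonal replacing your resolution of $E$, your sandwich argument then goes through verbatim and forces the limit to exist and equal $h_t(\Phi)$.
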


For $E,F \in \T$, we define the Euler characteristic $\chi(E,F)$ by \[\chi(E,F):=\sum_{i \in \ZZ}(-1)^i\dim_k\Ext^i(E,F).\] 
It induces the bilinear form $\chi(-,-)$ on the Grothendieck group $K_0(\T)$ of $\T$. The numerical Grothendieck group $K_{\mathrm{num}}(\T)$ of $\T$ is the quotient of $K_0(\T)$ by the radical of $\chi$. 
The group $K_{\mathrm{num}}(\T)$ is a finitely generated free abelian group by the Hirzebruch--Riemann--Roch theorem \cite{Lun}, \cite{Shk}.
An autoequivalence $\Phi : \T \iso \T$ induces the linear map 
$K_{\mathrm{num}}(\Phi):K_{\mathrm{num}}(\T) \iso K_{\mathrm{num}}(\T)$. 

The following is an analogue of Yomdin's inequality \cite{Yom}.

\begin{prop}[{\cite[Theorem 2.13]{KST}}]
Let $\Phi$ be an autoequivalence of $\T$. 
Then we have the inequality
\begin{equation}\label{cat-spec}
h_{\mathrm{cat}}(\Phi) \geq \log \rho(K_{\mathrm{num}}(\Phi)). 
\end{equation}
\end{prop}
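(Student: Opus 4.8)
The plan is to reduce the categorical entropy to the exponential growth rate of total $\Ext$-dimensions via Theorem \ref{entropy ext}, and then to bound these dimensions from below by a single Euler pairing that witnesses the dominant eigenvalue of $K_{\mathrm{num}}(\Phi)$. Since $\Phi$ is an autoequivalence, $\Phi^n \neq 0$ for every $n>0$, and $\T$ (being of finite type) admits a split generator; hence for any split generator $G$ one has $h_{\mathrm{cat}}(\Phi)=\lim_{n\to\infty}\frac1n\log\delta'_0(G,\Phi^n G)$, where $\delta'_0(G,F)=\sum_{i\in\ZZ}\dim_k\Ext^i(G,F)$. The basic analytic fact I will exploit is the elementary inequality $|\chi(G,F)|\le \delta'_0(G,F)$, i.e. the Euler characteristic is dominated by the total $\Ext$-dimension.

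First I would choose the generator carefully. Set $A:=K_{\mathrm{num}}(\Phi)$. Because $K_{\mathrm{num}}(\T)$ is spanned by classes of objects, I can pick objects $P_1,\dots,P_r$ whose classes form a $\QQ$-basis of $K_{\mathrm{num}}(\T)\otimes\QQ$, and enlarge any split generator $H$ to $G:=H\oplus P_1\oplus\cdots\oplus P_r$, which is again a split generator. Since each $P_k$ is a direct summand of $G$, the space $\Ext^i(P_k,\Phi^n P_l)$ is a direct summand of $\Ext^i(G,\Phi^n G)$, so dropping all other summands gives $\delta'_0(G,\Phi^n G)\ge \sum_{k,l}\sum_i \dim_k\Ext^i(P_k,\Phi^n P_l)\ge \sum_{k,l}|\chi(P_k,\Phi^n P_l)|=\sum_{k,l}|\langle[P_k],A^n[P_l]\rangle|$, using that $\chi$ descends to the Euler pairing $\langle-,-\rangle$ on $K_{\mathrm{num}}(\T)$ and that $[\Phi^n P_l]=A^n[P_l]$.

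Next I would extract the dominant eigenvalue. Over $\CC$ choose an eigenvector $v$ of $A$ with eigenvalue $\lambda$ satisfying $|\lambda|=\rho(A)$; since $\chi$ is nondegenerate on $K_{\mathrm{num}}(\T)$ (by construction we quotient $K_0$ by the radical of $\chi$), there exists $w$ with $\langle w,v\rangle\neq0$. Writing $v=\sum_l a_l[P_l]$ and $w=\sum_k b_k[P_k]$ and using bilinearity, I obtain $\rho(A)^n\,|\langle w,v\rangle|=|\langle w,A^n v\rangle|\le \big(\max_{k,l}|b_k a_l|\big)\sum_{k,l}|\langle[P_k],A^n[P_l]\rangle|\le C\,\delta'_0(G,\Phi^n G)$ for a constant $C$ independent of $n$. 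Applying $\frac1n\log(-)$ and letting $n\to\infty$, the left side contributes $\log\rho(A)$ while the $\frac1n\log C$ and $\frac1n\log|\langle w,v\rangle|$ terms vanish, yielding $h_{\mathrm{cat}}(\Phi)\ge\log\rho(K_{\mathrm{num}}(\Phi))$.

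The main point requiring care is that a naive single pairing such as $\chi(G,\Phi^n G)$ may vanish identically or simply fail to detect $\lambda$ if $[G]$ happens to be orthogonal to the relevant eigenspaces; packaging a \emph{full} basis of object classes into $G$ and then invoking nondegeneracy of $\chi$ is precisely what forces at least one of the pairings $\langle[P_k],A^n[P_l]\rangle$ to grow like $\rho(A)^n$. A secondary technical point is the passage from the triangle-complexity definition of $h_{\mathrm{cat}}$ to the $\Ext$-dimension formula $\delta'_0$, which is exactly where finiteness of type and the existence of a split generator are used; once that reduction is in hand, the remainder is linear algebra together with the single inequality $|\chi|\le\delta'_0$.
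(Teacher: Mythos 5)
Your proposal is correct and follows essentially the same route as the proof the paper relies on (the statement is quoted from \cite[Theorem 2.13]{KST} rather than reproved): reduce $h_{\mathrm{cat}}$ to the growth of $\delta'_0(G,\Phi^n G)$ via Theorem \ref{entropy ext}, bound $\delta'_0 \geq |\chi|$, and exploit that $\chi$ is nondegenerate on $K_{\mathrm{num}}(\T)$ together with a split generator enlarged by objects whose classes span $K_{\mathrm{num}}(\T)\otimes\QQ$. The only cosmetic difference is in the final linear-algebra step: you extract $\rho(K_{\mathrm{num}}(\Phi))$ by pairing a top eigenvector against a vector detecting it, whereas the cited argument phrases the same point through the invertibility of the Gram matrix of $\chi$ and Gelfand's formula for matrix norms.
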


\subsection{Hochschild entropy}


Let $\dgcat$ be the category of dg categories over a field $k$, 
whose morphisms are dg functors.
Denote the localization of $\dgcat$ along quasi-equivalences by $\hodgcat$. The internal hom defines the functor
\[
\RHom(-,-):\hodgcat^{\mathrm{op}} \times \hodgcat \to \hodgcat.
\]
The tensor product defines the functor 
\[
-\otimes^\mathbf{L}-: \hodgcat \times \hodgcat \to \hodgcat.
\]
For dg categories $\A,\B$, we have the natural bijection \cite{Toe}
\[
\hodgcat(\A,\B) \simeq  \mathrm{Iso}(H^0(\RHom(\A,\B))).
\]
Take a smooth proper dg category $\A$. 
Let $\D$ be the perfect derived dg category $\per_{\mathrm{dg}}(\A)$ of $\A$, 
which is a dg enhancement of $\per(\A)$. 
By \cite{Toe}, we have the isomorphism \[\RHom(\D,\D) \simeq \per_{\mathrm{dg}}(\A^{\mathrm{op}} \otimes^\mathbf{L} \A)\]
in $\hodgcat$. By the equivalence $H^0(\RHom(\D,\D)) \simeq \per(\A^{\mathrm{op}} \otimes^\mathbf{L} \A)$,
we have the object $\S^{-1}_{\D} \in H^0(\RHom(\D,\D))$ which is corresponding to the inverse dualizing complex of $\A$ in $\per(\A^{\mathrm{op}} \otimes^\mathbf{L} \A)$.
An object of $H^0(\RHom(\D,\D))$ (or an element in $\mathrm{Iso}(H^0(\RHom(\D,\D)))$) is called a {\it quasi-endofunctor} of $\D$.
For $\widetilde{\Phi}\in H^0(\RHom(\D,\D))$, we have the exact functor $H^0(\widetilde{\Phi}):\per(\A) \to \per(\A)$. Then we obtain the functor 
\[
H^0:~H^0(\RHom(\D,\D)) \to \mathrm{Fun}(\per(\A),\per(\A)).
\]

We define Hochschild (co)homology groups of quasi-endofunctors of $\D$ as follows.


\begin{dfn}\label{HH}
Take an integer $i \in \ZZ$. Let $\widetilde{\Phi}\in H^0(\RHom(\D,\D))$ be a quasi-functor. 

The $i$-th Hochschild cohomology group $HH^i(\widetilde{\Phi})$ of $\widetilde{\Phi}$ is defined by 
\[HH^i(\widetilde{\Phi}):=\Ext^i_{H^0(\RHom(\D,\D))}(\widetilde{\Phi}, \mathrm{id}_\D).\]
Similarly, the $i$-th Hochschild homology group $HH_i(\widetilde{\Phi})$ of $\widetilde{\Phi}$ is defined by 
\[HH_i(\widetilde{\Phi}):=\Ext^{-i}_{H^0(\RHom(\D,\D))}(\S^{-1}_\D, \widetilde{\Phi}).\]
We put 
\[HH^*(\widetilde{\Phi}):=\bigoplus_{i \in \ZZ}HH^i(\widetilde{\Phi}),\]
\[HH_*(\widetilde{\Phi}):=\bigoplus_{i \in \ZZ}HH_i(\widetilde{\Phi}).\]

\end{dfn}

\begin{rem}
By $H^0(\RHom(\D,\D)) \simeq \per(\A^{\mathrm{op}} \otimes^\mathbf{L} \A)$,   $HH^*(\widetilde{\Phi})$ and $HH_*(\widetilde{\Phi})$ are finite dimensional $k$-linear spaces for any quasi-functor $\widetilde{\Phi}\in H^0(\RHom(\D,\D))$.
\end{rem}
Definition \ref{HH} is the generalization of the usual Hochschild (co)homology group of $\D$.

\begin{rem}\label{rem:HHhom}
The $i$-th Hochschild cohomology group $HH^i(\D)$ of $\D$ is given by 
\[HH^i(\D)=HH^i(\mathrm{id}_\D).\]
Similarly, the $i$-th Hochschild homology group $HH_i(\D)$ of $\D$ is given by 
\[HH_i(\D)=HH_i(\mathrm{id}_\D).\]
\end{rem}

We define the Hochschild (co)homological entropy of quasi-endofunctors of $\D$.

\begin{dfn}\label{HH entropy}
Let $\widetilde{\Phi}\in H^0(\RHom(\D,\D))$ be a quasi-endofunctor of $\D$.
We define the Hochschild cohomological entropy of $\widetilde{\Phi}$ by
\[h_{HH^*}(\widetilde{\Phi}):=\limsup_{n \to \infty}\frac{1}{n}\log \mathrm{dim}_kHH^*(\widetilde{\Phi}^n). \]
Similarly, we define the Hochschild homological entropy of $\widetilde{\Phi}$ by
\[h_{HH_*}(\widetilde{\Phi}):=\limsup_{n \to \infty}\frac{1}{n}\log \mathrm{dim}_kHH_*(\widetilde{\Phi}^n). \]
\end{dfn}

\begin{thm}\label{HH-cat}
For $\widetilde{\Phi}\in H^0(\RHom(\D,\D))$ and $\Phi:=H^0(\widetilde{\Phi})$, we have 
\begin{equation}
h_{HH^*}(\widetilde{\Phi}) \leq h_{\mathrm{cat}}(\Phi)
\end{equation}
and
\begin{equation}\label{ineq-HH-cat-t}
h_{HH_*}(\widetilde{\Phi}) \leq h_{\mathrm{cat}}(\Phi).
\end{equation}
\end{thm}
\begin{proof}
Let $G \in \D$ be a split generator of $\D$.
Since $\D$ is smooth and proper,\\
$G \otimes \mathbf{R}\Hom(G,-)$ is a split generator of $H^0(\RHom(\D,\D))$. 
Firstly, we have
\begin{eqnarray*}
\mathrm{dim}_kHH^*(\widetilde{\Phi}^n) &=& \delta_0(k, \mathbf{R}\Hom(\widetilde{\Phi}^n, \mathrm{id}_\D))\\
&=& \delta_0(k, \mathbf{R}\Hom(\widetilde{\Phi}^n, \mathrm{id}_\D)^*)\\
&\leq& \delta_0(k, \mathbf{R}\Hom(G \otimes \mathbf{R}\Hom(G,-), \mathrm{id}_\D)^*)\\
&& \delta_0(\mathbf{R}\Hom(G \otimes \mathbf{R}\Hom(G,-), \mathrm{id}_\D)^*, \mathbf{R}\Hom(\widetilde{\Phi}^n, \mathrm{id}_\D)^*)\\
&\leq&  \delta_0(k, \mathbf{R}\Hom(G \otimes \mathbf{R}\Hom(G,-), \mathrm{id}_\D)^*) \\
&& \delta_0(G \otimes \mathbf{R}\Hom(G,-), \widetilde{\Phi}^n).
\end{eqnarray*}
Here, the first (resp. second) inequality is deduced from Proposition \ref{complexity ineq} (1) (resp. (2)). 

Moreover, we obtain
\begin{eqnarray*}
\delta_0(G \otimes \mathbf{R}\Hom(G,-), \widetilde{\Phi}^n)&\leq&\delta_0(G \otimes \mathbf{R}\Hom(G,-), \widetilde{\Phi}^n \circ (G \otimes \mathbf{R}\Hom(G,-)))\\
&&\delta_0(\widetilde{\Phi}^n \circ (G \otimes \mathbf{R}\Hom(G,-)), \Phi^n)\\
&\leq& \delta_0(G, H^0(\Phi)^n(G)) \delta_0(G \otimes \mathbf{R}\Hom(G,-), \mathrm{id}_\D).
\end{eqnarray*}
The first inequality is deduced from Proposition \ref{complexity ineq} (1).
The second inequality is deduced from the isomorphism 
$\widetilde{\Phi}^n \circ (G \otimes \mathbf{R}\Hom(G,-)) \simeq H^0(\Phi)^n(G) \otimes \mathbf{R}\Hom(G,-)$ and Proposition \ref{complexity ineq} (2).
Therefore, we have the inequality 
\[h_{HH^*}(\widetilde{\Phi}) \leq h_{\mathrm{cat}}(H^0(\Phi)).\]
To show $h_{HH_*}(\widetilde{\Phi}) \leq h_{\mathrm{cat}}(\Phi)$, we can apply similar computations.
\end{proof}

\begin{rem}
The inequality (\ref{ineq-HH-cat-t}) has already been proved via a different method by the first author and Atsushi Takahashi. 
\end{rem}


\begin{rem}
If $\D$ is a Calabi--Yau dg category, we have 
\[h_{HH^*}(\widetilde{\Phi})=h_{HH_*}(\widetilde{\Phi})\]
for any $\widetilde{\Phi}\in H^0(\RHom(\D,\D))$.
\end{rem}
We have the following question.
\begin{question}\label{HHHH}
Let $\widetilde{\Phi}\in H^0(\RHom(\D,\D))$ and $\Phi:=H^0(\widetilde{\Phi})$. 
When do we have 
\[
h_{HH^*}(\widetilde{\Phi})=h_{HH_*}(\widetilde{\Phi})=h_{\mathrm{cat}}(H^0(\Phi))?
\]
After the first version of this article was submitted on arXiv, Atsushi Takahashi pointed out that Question \ref{HHHH} is false in general: for a phantom dg category $\D\neq 0$, 
\[
h_{HH^*}({\rm id}_\D)=0\neq -\infty=h_{HH_*}({\rm id}_\D)
\]
by Remark \ref{rem:HHhom}. 
For example, (a dg enhancement of) an admissible subcategory of the derived category of the classical Godeaux surface is smooth, proper and phantom by \cite[Theorem 1.1]{BvBS} and \cite[Remark 5.17]{BLS}. 
\end{question}

For $\widetilde{\Phi}\in H^0(\RHom(\D,\D))$, we obtain the induced linear map
\[\widetilde{\Phi}^{HH_*}:HH_*(\D) \to HH_*(\D).\]
When $k=\CC$, we compare the Hochschild homological entropy $h_{HH_*}(\widetilde{\Phi})$ with the spectral radius $\rho(\widetilde{\Phi}^{HH_*})$.

\begin{prop}\label{HH-spec-rad-t}
Assume that $k=\CC$ and $H_{\mathrm{odd}}(\D)=0$. 
For $\widetilde{\Phi}\in H^0(\RHom(\D,\D))$, we have the inequality
\[h_{HH_*}(\widetilde{\Phi}) \geq \log \rho(\widetilde{\Phi}^{HH_*}).\]
\end{prop}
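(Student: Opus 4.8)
The plan is to compare $\dim_k HH_*(\widetilde{\Phi}^n)$ with the trace of the iterated operator $T:=\widetilde{\Phi}^{HH_*}$ on $HH_*(\D)$ via a categorical Lefschetz fixed-point formula, and then to extract the spectral radius by a recurrence argument. First note that Hochschild homology is a covariant additive invariant, so $T^n=(\widetilde{\Phi}^n)^{HH_*}$ and $T$ is a \emph{graded} endomorphism of $HH_*(\D)=\bigoplus_i HH_i(\D)$. Writing $\C:=H^0(\RHom(\D,\D))$ and recalling $HH_i(\widetilde{\Phi}^n)=\Ext^{-i}_\C(\S^{-1}_\D,\widetilde{\Phi}^n)$, the graded Euler characteristic of $HH_*(\widetilde{\Phi}^n)$ is the Euler form in $\C$:
\[
\sum_i (-1)^i \dim_k HH_i(\widetilde{\Phi}^n)=\chi_\C(\S^{-1}_\D,\widetilde{\Phi}^n).
\]

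The key step, and the main obstacle, is the Lefschetz-type identity
\[
\mathrm{str}\bigl(T^n\bigr)=\chi_\C(\S^{-1}_\D,\widetilde{\Phi}^n),
\]
where $\mathrm{str}$ is the supertrace for the $\ZZ$-grading of $HH_*(\D)$. This is the categorical Hirzebruch--Riemann--Roch / Cardy condition: the induced action of a kernel on Hochschild homology is adjoint to the Mukai pairing, whose value against the diagonal computes $\chi_\C(\S^{-1}_\D,-)$ (see \cite{Shk,Lun}). I would verify it first for $\widetilde{\Phi}=\mathrm{id}_\D$, where both sides reduce to $\chi(HH_*(\D))$, and then in general. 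Here the hypothesis $H_{\mathrm{odd}}(\D)=0$ enters decisively: since $T$ preserves the grading and $HH_{\mathrm{odd}}(\D)=0$, the supertrace coincides with the ordinary trace, whence $\mathrm{tr}(T^n)=\chi_\C(\S^{-1}_\D,\widetilde{\Phi}^n)$. Taking absolute values and applying the triangle inequality to the alternating sum gives the lower bound
\[
\dim_k HH_*(\widetilde{\Phi}^n)=\sum_i \dim_k HH_i(\widetilde{\Phi}^n)\ \geq\ \bigl|\mathrm{tr}(T^n)\bigr|.
\]

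It then remains to show $\limsup_{n\to\infty}\frac1n\log\bigl|\mathrm{tr}(T^n)\bigr|\geq\log\rho(T)$. If $\rho(T)=0$ this is trivial, so set $\rho:=\rho(T)>0$ and let $\lambda_1,\dots,\lambda_r$ be the distinct eigenvalues of modulus $\rho$, with algebraic multiplicities $m_1,\dots,m_r\in\ZZ_{>0}$. Since nilpotent parts contribute nothing to traces, $\mathrm{tr}(T^n)=\sum_j m_j\lambda_j^n+O\bigl((\rho')^n\bigr)$ for some $\rho'<\rho$, so $\rho^{-n}\mathrm{tr}(T^n)=\sum_j m_j e^{in\theta_j}+o(1)$ where $\lambda_j=\rho e^{i\theta_j}$. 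As the rotation by $(\theta_1,\dots,\theta_r)$ acts minimally on the closed subgroup of $(\RR/2\pi\ZZ)^r$ given by its orbit closure, the forward orbit returns arbitrarily close to the identity; thus there are $n\to\infty$ along which all $e^{in\theta_j}\to1$ simultaneously, and since the $m_j$ are positive integers, $\limsup_{n}\rho^{-n}\bigl|\mathrm{tr}(T^n)\bigr|=\sum_j m_j\geq1>0$. Combining with the dimension bound yields
\[
h_{HH_*}(\widetilde{\Phi})=\limsup_{n\to\infty}\frac1n\log\dim_k HH_*(\widetilde{\Phi}^n)\ \geq\ \log\rho(T).
\]
The crux is the trace identity; the potential loss of the spectral radius through cancellation among the top eigenvalues is precisely what the positivity of the multiplicities together with the recurrence argument rules out.
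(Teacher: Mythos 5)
Your proposal is correct, and its key step is exactly the paper's: invoke the Lefschetz fixed point theorem for Hochschild homology (Lunts \cite{Lun}) to identify the Euler characteristic $\sum_i(-1)^i\dim_k HH_i(\widetilde{\Phi}^n)$ with the trace of $T^n$, where $T:=\widetilde{\Phi}^{HH_*}$, using the hypothesis $HH_{\mathrm{odd}}(\D)=0$ to pass from supertrace to ordinary trace, and then bound $\dim_k HH_*(\widetilde{\Phi}^n)$ below by this quantity. The only divergence is in the final, elementary step of extracting $\log\rho(T)$ from the sequence $\mathrm{tr}(T^n)$: the paper computes the radius of convergence of the zeta-type series $\sum_n \frac{\mathrm{tr}(T^n)}{n}z^n=\log\det(1-Tz)^{-1}$, whose nearest singularity to the origin lies at distance $1/\rho(T)$, whereas you expand $\mathrm{tr}(T^n)=\sum_j m_j\lambda_j^n+O((\rho')^n)$ over the peripheral eigenvalues and use torus recurrence to produce a subsequence along which the top terms do not cancel. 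Both are standard proofs of $\limsup_n|\mathrm{tr}(T^n)|^{1/n}=\rho(T)$; yours is more hands-on and makes the non-cancellation mechanism explicit, and your insertion of absolute values ($\dim_k HH_*(\widetilde{\Phi}^n)\geq|\mathrm{tr}(T^n)|$ by the triangle inequality) is in fact slightly more careful about signs than the paper's formulation, which writes the bound without them. Note also that, like the paper, you ultimately take the Lefschetz/Riemann--Roch identity as a black box from \cite{Lun} and \cite{Shk}, so neither argument is more self-contained than the other on that point.
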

\begin{proof}
By the Lefschetz fixed point theorem for Hochschild homology \cite{Lun} and the assumption, 
we obtain 
\begin{eqnarray*}
\dim_kHH_*(\widetilde{\Phi}^n) &\geq& \sum_{i \in \ZZ} (-1)^i\dim_kHH_i(\widetilde{\Phi}^n)    \\
&=& \mathrm{tr}\Bigl(\bigl(\widetilde{\Phi}^{HH_*}\bigr)^n\Bigr). 
\end{eqnarray*}
Note that we have the equality
 \[\mathrm{exp}\Bigl(\sum_{i \in \ZZ}\frac{\mathrm{tr}((\widetilde{\Phi}^{HH_*})^n)}{i}z^i\Bigr)=\mathrm{det}(1-\widetilde{\Phi}^{HH_*}z)^{-1}. \]
 Computing the radius of convergence of the above function, we have
\[
\limsup_{n \to \infty} \mathrm{tr}\Bigl(\bigl(\widetilde{\Phi}^{HH_*}\bigr)^n\Bigr)^{1/n}=\rho(\widetilde{\Phi}^{HH_*}). 
\]
 Hence, we have proved the desired inequality.
\end{proof}
\begin{rem}
The proof of Proposition \ref{HH-spec-rad-t} is almost the same as that of \cite[Lemma 2.8]{DHKK}, which is used to compare $h_{{\rm cat}}(\Phi)$ with $\log \rho(\widetilde{\Phi}^{HH_*})$. 
\end{rem}


We interpret Definition \ref{HH} in terms of Fourier--Mukai kernels.
Let $X$ be a smooth projective variety over $k$. 
The dg category $\per_{\mathrm{dg}}(X)$ is isomorphic to the perfect derived dg category of some smooth proper dg category in $\hodgcat$ \cite[Lemma 3.27]{TV}. 
By {\cite[Theorem 8.15]{Toe}}, there is the natural isomorphism \[\RHom(\per_{\mathrm{dg}}(X),\per_{\mathrm{dg}}(X)) \simeq \per_{\mathrm{dg}}(X \times X) \]
in $\hodgcat$. Via the equivalence $H^0(\RHom(\per_{\mathrm{dg}}(X),\per_{\mathrm{dg}}(X))) \simeq \D^b(X \times X)$ of triangulated categories, the functor 
\[
H^0: H^0(\RHom(\per_{\mathrm{dg}}(X),\per_{\mathrm{dg}}(X))) \to \mathrm{Fun}(\D^b(X), \D^b(X))
\]
is corresponding to $\Phi_{(-)}: \D^b(X \times X) \to \mathrm{Fun}(\D^b(X), \D^b(X))$.
The corresponding quasi-functor to $\E\in\D^b(X\times X)$ is denoted by $\widetilde{\Phi_\E}$. 

\begin{rem}\label{HH via FM}
Let $\E \in \D^b(X \times X)$ be an object.
By {\cite[Theorem 8.15]{Toe}}, we have isomorphisms
\[HH^i(\widetilde{\Phi_\E}) \simeq \Ext^i(\E,\Delta_*\mathcal{O}_X[\dim X])\]
\[HH_i(\widetilde{\Phi_\E}) \simeq \Ext^{-i}(\Delta_*\omega^{-1}_X[-\dim X], \E)\]
for an integer $i \in \ZZ$,
where $\Delta: X\hookrightarrow  X \times X$ is the diagonal embedding.
\end{rem}
\subsubsection{$A_\infty$ version}\label{A_infty-ver}
We here consider slight generalizations for cohomologically unital (c-unital) $A_\infty$ functors (\cite[Ch.1 (2e)]{Sei}). 
Let $\B$ be a c-unital $A_\infty$ category over a field $k$. 
We denote the $A_\infty$ category of c-unital $A_\infty$ functors from $\B$ to $\B$ by $fun(\B,\B)$, 
and call an object in $fun(\B,\B)$  {\it (c-unital) $A_\infty$ endofunctor} of $\B$. 
We define the full subcategory $fun(\B,\B)^{qe}$ of $fun(\B,\B)$ consisting of quasi-equivalent $A_\infty$ endofunctors of $\B$. 
Define the group 
\begin{equation}\label{def-aut}
\Aut(\B):=\mathrm{Iso}(H^0(fun(\B,\B)^{qe})). 
\end{equation}

Suppose that $\B$ is quasi-equivalent to the perfect derived dg category $\D$ of a smooth proper (strictly unital) dg category. 
For $F\in\Aut(\B)$ (or more rigorously, $F\in H^0(fun(\B,\B))$), 
the {\it Hochschild cohomology} of $F$ is defined by
\begin{equation*}
HH^i(F)
:=\Ext^i_{H^0(fun(\B,\B))}(F, \mathrm{id}_\B)
(\simeq H^i(hom_{fun(\B,\B)}(F, \mathrm{id}_\B))). 
\end{equation*}
Canonaco--Ornaghi--Stellari proved the equivalence of categories
\[
\hodgcat\simeq \mathbf{hoA_\infty cat^c}_k
\]
and the isomorphism
\begin{equation}\label{COS}
\RHom(\D,\D)\simeq fun(\D,\D)(\simeq fun(\B,\B))
\end{equation}
in $\mathbf{hoA_\infty cat^c}_k$ (\cite[Theorem A,Theorem B, Corollary 2.6]{COS}), 
where $\mathbf{hoA_\infty cat^c}_k$ is the localization (with respect to quasi-equivalences) of the category $\mathbf{A_\infty cat^c}_k$ of c-unital $A_\infty$ categories over a field $k$, whose morphisms are c-unital $A_\infty$ functors. 
Therefore the above definition is compatible with Definition \ref{HH}. 
In the same manner as dg case, we can define the {\it  Hochschild cohomological entropy} of c-unital $A_\infty$ endofunctors. 
\begin{dfn}
For $F\in\Aut(\B)$, we define 
\[
h_{HH^*}(F):=\limsup_{n \to \infty}\frac{1}{n}\log \mathrm{dim}_kHH^*(F^n),
\]
which we call the {\it Hochschild cohomological entropy} of $F$. 
\end{dfn}
This definition is compatible with Definition \ref{HH entropy} by (\ref{COS}). 

\subsection{Base change by field extensions}
Let $k \subset K$ be an extension of fields. 
For a smooth projective variety $X$ over $k$, let $X_K:=X \times _k \mathrm{Spec }(K)$ be its base change. 
Taking the pullback with respect to the natural morphism $X_K \to X$, we have the exact functor
$(-)_K: \D^b(X) \to \D^b(X_K)$.

There are split generators of $\D^b(X)$ and $\D^b(X_K)$ which are compatible with the base change. 
\begin{rem}\label{split generator}
 Let $\mathcal{O}_X(1)$ be a very ample line bundle on $X$. 
 Put $\mathcal{O}_{X_K}(1):=\mathcal{O}_X(1)_K$. Then $\mathcal{O}_{X_K}(1)$ is a very ample line bundle on $X_K$. 
 By \cite[Theorem 4]{Orl2}, the object $G_p:=\bigoplus_{i=p}^{p+\dim X}\mathcal{O}_X(i)$ 
 is a split generator of $\D^b(X)$ for any integer $p \in \ZZ$. 
Thus the object $(G_{p})_K$ is also a split generator of $\D^b(X_K)$.
\end{rem}

The following are fundamental properties for base change by a field extension $k\subset K$.

\begin{lem}\label{compatible}
Let $X$ and $Y$ be smooth projective variety over $k$.
The following hold.
\begin{itemize}
\item[(1)] For an object  $E \in \mathrm{Coh}(X)$,  we have $(E^\vee)_K \simeq (E_K)^\vee$.
\item[(2)] For an object $E, F \in \D^b(X)$, we have $(E \otimes F)_K \simeq E_K \otimes F_K$.
\item[(3)] For an object $\E \in \D^b(X \times Y)$, we have $(-)_K \circ \Phi_\E \simeq \Phi_{\E_K} \circ (-)_K$.
\item[(4)] For an object $E \in \D^b(X)$, we have a natural isomorphism 
\[\mathbf{R}\Gamma(X, E) \otimes_k K \iso \mathbf{R}\Gamma(X_K, E_K) \]
\end{itemize}
\end{lem}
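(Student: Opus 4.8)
The plan is to deduce all four statements from the single fact that a field extension $k\subset K$ is flat, so that the structure morphism $\pi\colon\mathrm{Spec}(K)\to\mathrm{Spec}(k)$, and hence each base-change morphism $\pi_X\colon X_K\to X$ obtained from it by pullback, is flat. Consequently $(-)_K=\pi_X^*$ is exact and already computes the derived pullback, needing no correction terms. The four tools I would invoke are: exactness of $\pi_X^*$; its monoidality with respect to $\otimes^{\mathbf{L}}$; flat base change for higher direct images along projective morphisms; and the $\mathbf{R}\mathcal{H}om$-base-change isomorphism for perfect complexes, which is available because $X$ is smooth, so that every coherent sheaf on $X$ is perfect.

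First I would prove (2), which is the most formal: the natural map $\pi_X^*(E\otimes^{\mathbf{L}}F)\to\pi_X^*E\otimes^{\mathbf{L}}\pi_X^*F$ is an isomorphism for any morphism, being the monoidality of derived pullback. Next, for (1) I interpret $E^\vee$ as $\mathbf{R}\mathcal{H}om(E,\mathcal{O}_X)$; since $X$ is smooth, $E\in\mathrm{Coh}(X)$ is perfect, and for perfect $E$ the base-change morphism $\pi_X^*\mathbf{R}\mathcal{H}om(E,\mathcal{O}_X)\to\mathbf{R}\mathcal{H}om(\pi_X^*E,\pi_X^*\mathcal{O}_X)$ is an isomorphism. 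As $\pi_X^*\mathcal{O}_X=\mathcal{O}_{X_K}$, this yields $(E^\vee)_K\simeq(E_K)^\vee$; perfectness is exactly what makes this step legitimate.

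For (3) I would use the Cartesian square with corners $(X\times Y)_K=X_K\times Y_K$, $X\times Y$, $Y_K$, $Y$, whose vertical maps are the projections $p_K,p$ to the second factor and whose horizontal maps are the flat base-change morphisms. Flat base change for $\mathbf{R}p_*$ gives $\pi_Y^*\mathbf{R}p_*(-)\simeq\mathbf{R}(p_K)_*\pi_{X\times Y}^*(-)$. Applying this to $q^*E\otimes^{\mathbf{L}}\E$, together with the identity $\pi_{X\times Y}^*q^*\simeq q_K^*\pi_X^*$ (projections commute with base change) and part (2), transforms $(\Phi_\E E)_K$ into $\mathbf{R}(p_K)_*(q_K^*E_K\otimes^{\mathbf{L}}\E_K)=\Phi_{\E_K}(E_K)$, which is (3). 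Finally, (4) is the special case $Y=\mathrm{Spec}(k)$: flat base change for the structure morphism $f\colon X\to\mathrm{Spec}(k)$ gives $\pi^*\mathbf{R}f_*E\simeq\mathbf{R}(f_K)_*E_K$, and identifying $\pi^*(-)$ on $\mathrm{Spec}(k)$-modules with $-\otimes_kK$ and $\mathbf{R}f_*$ with $\mathbf{R}\Gamma(X,-)$ produces the stated isomorphism.

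The only non-formal ingredients are the two base-change isomorphisms, and the main point to watch is their hypotheses. The $\mathbf{R}\mathcal{H}om$ statement in (1) genuinely needs $E$ perfect, supplied here by smoothness of $X$, and the flat base change for $\mathbf{R}p_*$ in (3)--(4) needs the relevant square to be Cartesian with one pair of sides flat, which holds because base change along $k\subset K$ is flat and commutes with fibre products; properness of $X$ and $Y$ ensures the pushforwards stay within $\D^b$. Beyond these, every isomorphism is the naturality of the derived functors involved, so I expect no essential obstacle.
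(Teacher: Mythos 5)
Your proposal is correct and takes essentially the same route as the paper's proof: monoidality of flat pullback for (2), the $\mathbf{R}\mathcal{H}om$--pullback compatibility for (1), and flat base change along the Cartesian squares induced by $\mathrm{Spec}(K)\to\mathrm{Spec}(k)$ for (3) and (4). Your explicit justification in (1) via perfectness of $E$ (from smoothness of $X$) is a hypothesis the paper leaves implicit, but the argument is otherwise identical.
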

\begin{proof}

(1) Take an object $E \in \D^b(X)$. 
Recall the Cartesian diagram.
\[\xymatrix{X_K \ar[d]_{} \ar[r]^{v} & X \ar[d]^{} \\
\mathrm{Spec}(K) \ar[r]^{} & \mathrm{Spec}(k) } \]
We have 
\begin{eqnarray*}
(E^\vee)_K&=& v^*(E^\vee)\\
&\simeq& v^*\mathbf{R}\mathcal{H}om(E, \mathcal{O}_X)     \\
&\simeq& \mathbf{R}\mathcal{H}om(v^*E, \mathcal{O}_{X_K})\\
&=& (E_K)^\vee. 
\end{eqnarray*}

\noindent
(2) This is immediately deduced from the compatibility between the pull-back and the tensor product.

\noindent
(3) Take an object $\E \in \D^b(X \times Y)$.
Consider the Cartesian diagrams.
\[\xymatrix{
&X_K \ar[r]^{v} \ar[d]_{\delta}  & X \ar[d]^{T\varepsilon}&&&X_K \times Y_K \ar[r]^{w} \ar[d]_{} & X \times Y \ar[d]^{}\\
&\mathrm{Spec}(K) \ar[r]^{} & \mathrm{Spec}(k) && &\mathrm{Spec}(K) \ar[r]^{} & \mathrm{Spec}(k)
}\]
Let $p_K: X_K \times Y_K \to X_K$ and $q_K: X_K \times Y_K \to Y_K$ be projections.
For an object $E \in \D^b(X)$, we have
\begin{eqnarray*}
\Phi_\E(E)_K &=& v^*(\mathbf{R}p_*(q^*E \otimes \E))\\
&\simeq& \mathbf{R}p_{K*}w^*(q^*E \otimes \E)     \\
&\simeq&  \mathbf{R}p_{K*}(w^*q^*E \otimes w^*\E)  \\
&\simeq& \mathbf{R}p_{K*}(q_{K*}v^*E \otimes w^*\E) \\
&=& \Phi_{\E_K}(E_K). 
\end{eqnarray*}
Here, we use flat base changes in the second row and the fourth row.

\noindent
(4) It follows from the flat base change theorem.
\end{proof}

The categorical entropy and the Hochschild (co)homological entropy of Fourier--Mukai functors are invariant under base changes. 
\begin{thm}\label{base-change}
For an object $\E \in \D^b(X\times X)$, we have 
\[ h_t(\Phi_\E)=h_t(\Phi_{\E_K})  \]
for any real number $t \in \RR$, and 
\[ h_{HH^*}(\widetilde{\Phi_\E})=h_{HH^*}(\widetilde{\Phi_{\E_K}}),
\hspace{0.2cm}
h_{HH_*}(\widetilde{\Phi_\E})=h_{HH_*}(\widetilde{\Phi_{\E_K}}). 
\]
\end{thm}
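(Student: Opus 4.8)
The plan is to reduce everything to the single fact that the dimension of each individual $\Ext$-group is unchanged under the extension $k\subset K$. Concretely, I would first establish the base-change isomorphism
\[
\Ext^i_X(E,F)\otimes_k K \cong \Ext^i_{X_K}(E_K,F_K)
\]
for all $E,F\in\D^b(X)$ and all $i\in\ZZ$. Writing $\Ext^i_X(E,F)=H^i\mathbf{R}\Gamma(X,\mathbf{R}\mathcal{H}om(E,F))$ and combining the internal-Hom analogue of Lemma \ref{compatible} (1), namely the flat-base-change identity $(\mathbf{R}\mathcal{H}om(E,F))_K\simeq\mathbf{R}\mathcal{H}om(E_K,F_K)$, with Lemma \ref{compatible} (4) applied to $\mathbf{R}\mathcal{H}om(E,F)$ and the exactness of $-\otimes_k K$ over the field $k$, this isomorphism follows at once. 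Since $\dim_K(V\otimes_k K)=\dim_k V$ for a finite-dimensional $k$-space $V$, it yields $\dim_k\Ext^i_X(E,F)=\dim_K\Ext^i_{X_K}(E_K,F_K)$, and hence $\delta'_t(E,F)=\delta'_t(E_K,F_K)$ for every $t\in\RR$, where the left side is computed over $k$ and the right side over $K$.

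For the categorical entropy I would use the compatible split generators $G=G_p$ of $\D^b(X)$ and $G_K=(G_p)_K$ of $\D^b(X_K)$ from Remark \ref{split generator}. Iterating Lemma \ref{compatible} (3) gives $(-)_K\circ\Phi_\E^n\simeq\Phi_{\E_K}^n\circ(-)_K$, so $(\Phi_\E^n(G'))_K\simeq\Phi_{\E_K}^n(G'_K)$; in particular $\Phi_\E^n$ is nonzero if and only if $\Phi_{\E_K}^n$ is, so Theorem \ref{entropy ext} applies simultaneously on both sides. Applying the $\delta'_t$-invariance of the first step with $E=G$ and $F=\Phi_\E^n(G')$ gives $\delta'_t(G,\Phi_\E^n(G'))=\delta'_t(G_K,\Phi_{\E_K}^n(G'_K))$, and dividing by $n$ and passing to the limit in Theorem \ref{entropy ext} yields $h_t(\Phi_\E)=h_t(\Phi_{\E_K})$ for all $t$.

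For the Hochschild entropy I would pass to the Fourier--Mukai description of Remark \ref{HH via FM}. Since $\widetilde{\Phi_\E}^n$ has kernel the $n$-fold convolution $\E^{\circ n}$, one has
\[
HH^i(\widetilde{\Phi_\E}^n)\simeq\Ext^i_{X\times X}(\E^{\circ n},\Delta_*\mathcal{O}_X[\dim X]).
\]
Flat base change (the convolution analogue of Lemma \ref{compatible} (3)) gives $(\E^{\circ n})_K\simeq(\E_K)^{\circ n}$, while $(\Delta_*\mathcal{O}_X)_K\simeq\Delta_{K*}\mathcal{O}_{X_K}$ and $\dim X_K=\dim X$. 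Feeding these into the $\Ext$-dimension invariance of the first step, now applied on $X\times X$, produces $\dim_k HH^*(\widetilde{\Phi_\E}^n)=\dim_K HH^*(\widetilde{\Phi_{\E_K}}^n)$ for every $n$, and taking $\limsup_n\frac1n\log(-)$ gives the equality of cohomological entropies. The homological case is identical, replacing the above by $\Ext^{-i}_{X\times X}(\Delta_*\omega_X^{-1}[-\dim X],\E^{\circ n})$ and using $(\Delta_*\omega_X^{-1})_K\simeq\Delta_{K*}\omega_{X_K}^{-1}$, which holds since $\omega_{X_K}\simeq(\omega_X)_K$ for smooth $X$.

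All the iteration and bookkeeping is routine; the only genuine content is the first step, the base-change invariance of $\Ext$-dimensions. I expect the one point requiring care there to be the internal-Hom compatibility $(\mathbf{R}\mathcal{H}om(E,F))_K\simeq\mathbf{R}\mathcal{H}om(E_K,F_K)$ for arbitrary $F$, which generalizes the case $F=\mathcal{O}_X$ recorded in Lemma \ref{compatible} (1); it is again a flat-base-change statement, proved by the same Cartesian-diagram argument, but it is the step on which the whole reduction rests.
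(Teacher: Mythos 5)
Your proposal is correct and follows essentially the same route as the paper's proof: flat base change (Lemma \ref{compatible}) applied to the compatible split generators of Remark \ref{split generator} to get $\delta'_t(G,\Phi^n_\E(G))=\delta'_t(G_K,\Phi^n_{\E_K}(G_K))$ and conclude via Theorem \ref{entropy ext}, then Remark \ref{HH via FM} together with base change for the Hochschild part. If anything you are slightly more explicit than the paper at two points: you invoke the general internal-Hom compatibility $(\mathbf{R}\mathcal{H}om(E,F))_K\simeq\mathbf{R}\mathcal{H}om(E_K,F_K)$, where the paper instead uses $\mathbf{R}\Hom(G,F)\simeq\mathbf{R}\Gamma(X,F\otimes G^\vee)$ for the locally free generator $G$ together with parts (1), (2), (4) of Lemma \ref{compatible}, and you spell out the convolution-power compatibility $(\E^{\circ n})_K\simeq(\E_K)^{\circ n}$, which the paper uses implicitly when passing from its displayed isomorphism for $\E$ to the entropies of the powers $\widetilde{\Phi_\E}^n$.
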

\begin{pf}
Let $\mathcal{O}_X(1)$ be a very ample line bundle on $X$. Then $G:=\bigoplus_{i=0}^{\mathrm{dim}X}\mathcal{O}_X(i)$ is a split generator of $\D^b(X)$.
By Remark \ref{split generator}, the object $G_K$ is a split generator of $\D^b(X_K)$.
By Lemma \ref{compatible}, we obtain
\begin{eqnarray*}
\mathbf{R}\mathrm{Hom}(G, \Phi^n_{\E}(G))\otimes_k K &\simeq& \mathbf{R}\Gamma(X,\Phi^n_{\E}(G) \otimes G^\vee)\otimes_k K \\
&\simeq&  \mathbf{R}\Gamma(X_K, (\Phi^n_{\E}(G) \otimes G^\vee)_K)  \\
&\simeq&   \mathbf{R}\Gamma(X_K, \Phi^n_{\E_K}(G_K) \otimes (G_K)^\vee) \\
&\simeq& \mathbf{R}\mathrm{Hom}(G_K, \Phi^n_{\E_K}(G_K)). 
\end{eqnarray*}
Therefore, we have
\[\delta'_t(G, \Phi^n_\E(G))=\delta'_t(G_K, \Phi^n_{\E_K}(G_K)) \]
for any integer $n \in \ZZ_{>0}$.
By Theorem \ref{entropy ext}, we have $h_t(\Phi_\E)=h_t(\Phi_{\E_K})$.

By Lemma \ref{compatible} (4) and Remark \ref{HH via FM}, we obtain
\begin{eqnarray*}
HH^*(\widetilde{\Phi_{\E_K}}) &\simeq& \bigoplus_{i \in \ZZ}\Ext^i(\E_K,\Delta_*\mathcal{O}_{X_K}[\dim X_K]) \\
&\simeq& \bigoplus_{i \in \ZZ}\Ext^i(\E,\Delta_*\mathcal{O}_X[\dim X])\otimes_k K  \\
&\simeq&  HH^*(\widetilde{\Phi_\E})\otimes_k K,
\end{eqnarray*}
which implies $h_{HH^*}(\widetilde{\Phi_{\E}})=h_{HH^*}(\widetilde{\Phi_{\E_K}})$. 
The remaining equality can be proved similarly.
\end{pf}

\section{Algebraic K3 surface}
In this section, we recall some notions for algebraic K3 surfaces. 

\subsection{Autoequivalence groups}

Let $X$ be an algebraic K3 surface over an algebraically closed field $K$ of characteristic zero. 
Fourier--Mukai kernels induce an action on $HH_*(X):=HH_*(\per_{\mathrm{dg}}(X))$ (\cite[Section 4]{CW}), so we obtain a map 
\[
\Aut(\D^b(X))\to \Aut(HH_*(X));~\Phi\mapsto\Phi^{HH_*}.
\]
We define the two subgroups $\Aut^0(\D^b(X)),~\Aut_{CY}(\D^b(X))$ of $\Aut(\D^b(X))$ by
\begin{eqnarray*}
\Aut^0(\D^b(X))&:=&\left\{\Phi\in\Aut(\D^b(X))\mid
\Phi^{HH_*}=\mathrm{id}_{HH_*(X)}  \right\}\\
\Aut_{CY}(\D^b(X))&:=&\left\{\Phi\in\Aut(\D^b(X))\mid \Phi^{HH_*}|_{HH_2(X)}=\mathrm{id}_{HH_2(X)} \right\}. 
\end{eqnarray*}
An element in $\Aut_{CY}(\D^b(X))$ is called a {\it Calabi--Yau autoequivalence}.  
\subsection{Complex model}



Let $K$ be a field of characteristic zero, and $X$ a K3 surface over $K$. 
We can define $X$ using only a finite number of elements of $K$, so there exists a finitely-generated field $\QQ\subset k\subset K$ and a variety $X'$ over $k$ such that $X\simeq X'\times_{k}K$. 
We can embed $k$ in $\CC$, so we obtain a variety $X_\CC \simeq X'\times_{k} \CC$. 
Using the fact that flat base change commutes with 
cohomology, one can show that $X'$, and therefore $X_\CC$, are also K3 surfaces. 
We call $X_\CC$ a {\it complex model} of $X$. 
Applying the pull-back functor $(~)_K$, 
via the extension $k\hookrightarrow K$, to Fourier--Mukai kernels, one has a morphism
\[
(~)_K:~\Aut(\D^b(X'))\to\Aut(\D^b(X));~\Phi_\E\mapsto \Phi_{\E_K}.
\]
\begin{lem}[{\cite[Chapter 16, Section 4.2]{Huy2}, see also \cite[Lemma 6.7]{SS2}}]\label{cpx-model-aut}
Let $X$ be an algebraic K3 surface over an algebraically closed field $K$ of characteristic zero. 
Then base change by the extensions $\CC\hookleftarrow \bar{k}\hookrightarrow K$ yields the isomorphisms
\begin{equation*}
\alpha:~\Aut(\D^b(X_\CC))\xleftarrow[(~)_\CC]{\sim}\Aut(\D^b(X'\times_{k}\bar{k}))\xrightarrow[(~)_K]{\sim}\Aut(\D^b(X));~\Phi\mapsto\alpha(\Phi)
\end{equation*}
Moreover, these isomorphisms induce
\begin{equation*}
\Aut^0(\D^b(X_\CC))\simeq\Aut^0(\D^b(X))\text{ and }
\Aut_{CY}(\D^b(X_\CC))\simeq\Aut_{CY}(\D^b(X)).
\end{equation*}
\end{lem}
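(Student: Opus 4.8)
The plan is to reduce the statement to a single assertion about extension of the algebraically closed base field. Since $\bar{k}$ embeds into both $\CC$ and $K$, it suffices to prove that for any extension $\bar{k}\hookrightarrow L$ of algebraically closed fields of characteristic zero the base change map
\[(-)_L:\Aut(\D^b(X'_{\bar{k}}))\to\Aut(\D^b(X'_{\bar{k}}\times_{\bar{k}}L))\]
is a group isomorphism; taking $L=\CC$ and $L=K$ and composing then produces $\alpha$. That $(-)_L$ is a well-defined homomorphism follows from Orlov's representability theorem \cite{Orl1} (every autoequivalence is $\Phi_\E$ for a kernel $\E$) together with Lemma \ref{compatible}: the convolution product is assembled from pullbacks, derived tensor products and pushforwards, each commuting with the flat base change $\mathrm{Spec}(L)\to\mathrm{Spec}(\bar{k})$, so $(\E\circ\F)_L\cong\E_L\circ\F_L$ and the inverse kernel base-changes to an inverse; hence equivalences are sent to equivalences.

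For injectivity I would apply flat base change to Hom-complexes. Taking $\mathbf{R}\Gamma$ of $\RHom(\G,\G')$ on $X'_{\bar{k}}\times X'_{\bar{k}}$, parts (1), (2), (4) of Lemma \ref{compatible} give $\Hom(\G,\G')\otimes_{\bar{k}}L\cong\Hom(\G_L,\G'_L)$ for kernels $\G,\G'$ over $\bar{k}$. An isomorphism $\G_L\iso\G'_L$ is thus an $L$-point of the locus of isomorphisms inside the affine space $\mathbb{A}(\Hom(\G,\G'))$ over $\bar{k}$; this locus is open and, being nonempty, carries a $\bar{k}$-point because $\bar{k}$ is algebraically closed, so $\G\cong\G'$ already over $\bar{k}$.

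The main difficulty is surjectivity. Given $\Phi=\Phi_\E$ over $L$, I would spread $\E$ out: writing $L$ as a filtered union of finitely generated $\bar{k}$-subalgebras and using finite presentation, there is a finitely generated integral $\bar{k}$-algebra $R\subset L$ and a relative kernel $\mathcal{E}_R$ on $(X'_{\bar{k}}\times X'_{\bar{k}})\times_{\bar{k}}\mathrm{Spec}(R)$ defining a relative Fourier--Mukai equivalence whose generic fibre recovers $\E$. The key point is that the fibres of this family are constant up to isomorphism, and this is where rigidity enters: for the kernel $\G$ of any autoequivalence one has $\Ext^1_{X'_{\bar{k}}\times X'_{\bar{k}}}(\G,\G)\cong HH^1(X'_{\bar{k}})=H^1(\mathcal{O})\oplus H^0(T)=0$, so $\G$ is rigid, and a family of rigid kernels over the connected base $\mathrm{Spec}(R)$ has isomorphic fibres. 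Consequently the fibre $\G$ over a $\bar{k}$-point of $\mathrm{Spec}(R)$ satisfies $\G_L\cong\E$, producing the desired preimage. I expect the deformation-theoretic step ``rigid fibres $\Rightarrow$ constant family'' to be the technically delicate part; alternatively it can be phrased lattice-theoretically, using that the action on the numerical Grothendieck group is locally constant and that $\mathrm{NS}$ is invariant under extension of algebraically closed fields in characteristic zero.

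Finally, for the subgroups I would check that $\alpha$ intertwines the actions on Hochschild homology. By Remark \ref{HH via FM} the map $\Phi_\E^{HH_*}$ is read off from $\Ext$-groups between $\E$ and the shifted diagonal, and Lemma \ref{compatible}(4) yields $HH_*(X'_{\bar{k}}\times_{\bar{k}}L)\cong HH_*(X'_{\bar{k}})\otimes_{\bar{k}}L$ compatibly with these actions, and likewise on the graded piece $HH_2$. Hence $(\G_L)^{HH_*}$ is the extension of scalars of $\G^{HH_*}$, and since a $\bar{k}$-linear endomorphism is the identity if and only if its $L$-linear extension is, the isomorphisms carry $\{\Phi^{HH_*}=\mathrm{id}\}$ and $\{\Phi^{HH_*}|_{HH_2}=\mathrm{id}\}$ to one another. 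This is exactly $\Aut^0(\D^b(X_\CC))\simeq\Aut^0(\D^b(X))$ and $\Aut_{CY}(\D^b(X_\CC))\simeq\Aut_{CY}(\D^b(X))$.
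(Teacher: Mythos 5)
The paper gives no proof of this lemma at all, citing instead \cite[Chapter 16, Section 4.2]{Huy2} and \cite[Lemma 6.7]{SS2}, and your argument reconstructs essentially the proof given in those sources: Orlov representability plus flat base change of kernels for well-definedness, descent of isomorphisms between kernels for injectivity, spreading out combined with rigidity of Fourier--Mukai kernels ($\Ext^1(\E,\E)\simeq HH^1(X)=0$ for a K3 surface) for surjectivity, and base-change compatibility of the action on $HH_*$ for the assertions about $\Aut^0$ and $\Aut_{CY}$. Your proposal is correct and takes the same approach as the cited proof; the step you flag as delicate (rigid fibres over a connected integral base force the family to be generically constant) is indeed the technical heart, and the references settle it via cohomology and base change, showing that the locus where the family agrees with the constant family at a well-chosen closed point is open and therefore contains the generic point.
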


\subsection{Complex K3 surface and Mukai lattice}
Let $X$ be a complex algebraic K3 surface. 
The integral cohomology group $H^*(X,\ZZ)$ of $X$ has the lattice structure given by the Mukai pairing 
\begin{equation*}\label{Mukai pairing}
((r_1,c_1,m_1), (r_2,c_2,m_2)):=c_1 \cdot c_2-r_1m_2-r_2m_1
\end{equation*}
for $(r_1, c_1, m_1), (r_2,c_2,m_2) \in H^*(X,\ZZ)$.
The lattice $H^*(X,\ZZ)$ called the {\it Mukai lattice} of $X$ is an even unimodular lattice of signature $(4,20)$.
The Mukai lattice has a weight two Hodge structure $\widetilde{H}(X)$ given by
\begin{equation*}
\widetilde{H}^{2,0}(X):=H^{2,0}(X),~
\widetilde{H}^{1,1}(X):=\bigoplus_{p=0}^{2}H^{p,p}(X),~
\widetilde{H}^{0,2}(X):=H^{0,2}(X).
\end{equation*}
The integral $(1,1)$-part of $\widetilde{H}(X)$ denoted by $\NN(X)$ 
is equal to $H^0(X,\ZZ)\oplus \mathrm{NS}\oplus H^4(X,\ZZ)$. 
The lattice $\NN(X)$ is called the {\it algebraic Mukai lattice}. 

For an object $E \in \D^b(X)$, the {\it Mukai vector} $v(E) \in H^{2*}(X,\QQ)$
of $E$ is given by
\[
v(E)
:=\ch(E)\sqrt{\mathrm{td}_X}
=(\mathrm{rk}(E), c_1(E), \chi(E)-\mathrm{rk}(E)).
\]
By the Riemann--Roch formula, we have the isomorphism $v:K_{\mathrm{num}}(X) \iso \NN(X)$ satisfying
$(v(E),v(F))=-\chi(E,F)$ for any objects $E,F \in \D^b(X)$.

For any autoequivalence $\Phi_\E\in\Aut(\D^b(X))$, 
we define the {\it cohomological Fourier--Mukai transform} $\Phi^H_{\E}:H^{*}(X,\ZZ) \iso H^{*}(X,\ZZ)$ associated to $\Phi_\E$ by
\[
\Phi^H_\E(v):=p_*(q^*(v)\cdot v(\E)), 
\]
which is a Hodge isometry of $\widetilde{H}(X)$. 

The actions of $\Aut(\D^b(X))$ on $HH_*(X)$ and $H^*(X,\CC)$ are compatible with the Hochschild--Kostant--Rosenberg isomorpshim by \cite[Theorem 1.2]{MS} (see also \cite[Appendix A]{BB}). 
we thus have 
\begin{eqnarray*}
\Aut^0(\D^b(X))&=&\{\Phi \in \Aut(\D^b(X)) \mid \Phi^H=\mathrm{id}_{H^*(X,\ZZ)} \}\\
\Aut_{CY}(\D^b(X))&=&\{\Phi \in \Aut(\D^b(X)) \mid \Phi^H|_{\widetilde{H}^{2,0}(X)}=\mathrm{id}_{\widetilde{H}^{2,0}(X)} \}.
\end{eqnarray*}

\section{Symplectic K3 surface}
{\it Symplectic K3 surface} $(\check{X},\omega)$ means a complex K3 surface $\check{X}$ with a K\"ahler form $\omega$ on $\check{X}$. 
We collect several facts on symplectic mapping class groups and Fukaya categories from \cite{SS1,SS2}. 

\subsection{Symplectic mapping class group}
Let $(\check{X},\omega)$ be a symplectic K3 surface. 
We denote ${\rm Symp}(\check{X},\omega)$ the group of symplectomorphisms of $(\check{X},\omega)$ equipped with the $C^\infty$-topology. 
The group $\pi_0{\rm Symp}(\check{X},\omega)$ is called the {\it symplectic mapping class group} of $(\check{X},\omega)$. 
Since $H^1(\check{X},\ZZ)=0$, the connected component containing the identity ${\rm Symp}_0(\check{X},\omega):={\rm Symp}(\check{X},\omega)\cap{\rm Diff}_0(\check{X})$ is exactly the normal subgroup ${\rm Ham}(\check{X},\omega)$ consisting of Hamiltonian diffeomorphisms of $(\check{X},\omega)$. 
We thus have 
\[
\pi_0{\rm Symp}(\check{X},\omega)
\simeq{\rm Symp}(\check{X},\omega)/{\rm Ham}(\check{X},\omega)
\]
The {\it symplectic Torelli subgroup} $I(\check{X},\omega)\subset\pi_0{\rm Symp}(\check{X},\omega)$ is the kernel of the natural homomorphism
\[
\pi_0{\rm Symp}(\check{X},\omega)\to\Aut(H^*(\check{X},\ZZ)).
\]
We call an element in $I(\check{X},\omega)\subset\pi_0{\rm Symp}(\check{X},\omega)$ a {\it symplectic Torelli mapping class}. 

Let $(N,(\ ,\ ))$ be a lattice of signature $(2,t)$. 
The {\it periodic domain} associated with $N$ is 
\begin{equation*}
\Omega(N):=\{\Omega\in\PP(N\otimes_\ZZ\CC)\ |\ (\Omega,\Omega)=0,\ (\Omega,\bar{\Omega})>0\}. 
\end{equation*}
The group of lattice automorphisms $\Aut(N)$ acts on $\Omega(N)$. 
The periodic domain $\Omega(N)$ has two connected components distinguished by the orientation of the positive definite two-plane $(\langle {\rm Re}\Omega,{\rm Im}\Omega\rangle_\RR,(\ ,\ ))$, and interchanged by complex conjugation. 
We denote them by $\Omega^\pm(N)$. 
We denote the set of ($-2$)-classes in $N$ by $\Delta(N):=\{\delta\in N|(\delta,\delta)=-2\}$. 
We define
\begin{equation*}
\Omega^\pm_0(N):=\Omega^\pm(N)\backslash \bigcup_{\delta\in\Delta(N)}
\PP((\delta\otimes_\ZZ\CC)^{\perp}). 
\end{equation*}
Let $\Gamma(N)\subset\Aut(N)$ be the subgroup of lattice automorphisms acting on the discriminant group $N^*/N$. 
The group $\Gamma^+(N)\subset\Gamma(N)$ is defined to be the subgroup of lattice automorphisms preserving $\Omega^+(N)$. 
We define the quotient stack
\begin{equation*}
{\mathcal M}_0(N):=[\Omega^+_0(N)/\Gamma^+(N)]. 
\end{equation*}
Let $\pi_1({\mathcal M}_0(N))$ be the stacky fundamental group of ${\mathcal M}_0(N)$ 
(see \cite[subsection 2.1 and 2.2]{SS2}). 

We define the lattice 
\begin{equation*}
\NN(\check{X},\omega):=[\omega]^{\perp}\cap H^2(\check{X},\ZZ)
\end{equation*}
equipped with the cup product pairing, 
and suppose that $\NN(\check{X},\omega)$ has signature $(2,t)$. 
The {\it complex moduli space} ${\mathcal M}_{\mathrm{cpx}}(\check{X},\omega)$ of $(\check{X},\omega)$ is defined to be 
\begin{equation*}
{\mathcal M}_{\mathrm{cpx}}(\check{X},\omega):=
{\mathcal M}_0(\NN(\check{X},\omega)). 
\end{equation*}
We can construct the symplectic monodromy homomorphism (\cite[Proposition 3.9]{SS2}): 
\begin{equation*}
\pi_1({\mathcal M}_{\mathrm{cpx}}(\check{X},\omega))\to\pi_0{\rm Symp}(\check{X},\omega).
\end{equation*}
\subsection{Fukaya category}
Let $\Lambda$ denote the universal Novikov field over $\CC$: 
\begin{equation*}
\Lambda:=\left\{\sum^{\infty}_{j=0}c_jq^{\lambda_j}~\middle|~c_j\in\CC,\lambda_j\in\RR, \lim_{j\to\infty}\lambda_j=\infty\right\}. 
\end{equation*}
The field $\Lambda$ is an algebraically closed field extension of $\CC$, with a non-Archimedean valuation ${\rm val}:\Lambda\to\RR\cup\{\infty\}$ defined by 
$
{\rm val}\left(\sum^{\infty}_{j=0}c_jq^{\lambda_j}\right)
:=\min_{j}\{\lambda_j~|~c_j\neq0\}.
$
We can define a $\Lambda$-linear $\ZZ$-graded c-unital non-curved $A_{\infty}$ category ${\rm Fuk}(\check{X},\omega)$ called {\it Fukaya category} of a symplectic K3 surface $(\check{X},\omega)$ (see \cite[subsection 2.5]{SS1} and \cite[subsection 5.1]{SS2}).
The split-closed derived category of ${\rm Fuk}(\check{X},\omega)$ is denoted by ${\mathfrak D}{\rm Fuk}(\check{X},\omega)$ (see \cite[Ch.1 (4c)]{Sei}), 
and set $\D{\rm Fuk}(\check{X},\omega):=H^0({\mathfrak D}{\rm Fuk}(\check{X},\omega))$. 
The (graded) symplectic mapping class group of $(\check{X},\omega)$ naturally acts on ${\mathfrak D}{\rm Fuk}(\check{X},\omega)$, so that we have the homomorphism (see \cite[subsection 5.6]{SS2}): 
\begin{equation}\label{sMCG-Fuk}
\pi_0{\rm Symp}(\check{X},\omega)\to\Aut_{CY}({\mathfrak D}{\rm Fuk}(\check{X},\omega))/[2];~~\phi\mapsto\phi_* 
\end{equation}
where $\Aut_{CY}({\mathfrak D}{\rm Fuk}(\check{X},\omega))$ is the subgroup of $\Aut({\mathfrak D}{\rm Fuk}(\check{X},\omega))$ (cf. (\ref{def-aut})) consisting of $F\in \Aut({\mathfrak D}{\rm Fuk}(\check{X},\omega))$ which preserves a 2-Calabi--Yau structure on ${\mathfrak D}{\rm Fuk}(\check{X},\omega)$ (see \cite[subsection 5.3, 5.4 and Lemma 5.11]{SS2}). 

\section{Homological mirror symmetry}
\subsection{Example: the mirror quartic}
Let $\check{X}_0$ be the complex K3 surface called {\it mirror quartic}, defined as the crepant resolution of the quotient of the Fermat quartic hypersurface in $\PP^3_{\CC}$ by $\ZZ/4\times\ZZ/4$. 
We define the set
\[
\Xi_0:=\left\{\kappa=(\kappa_1,\kappa_2,\kappa_3,\kappa_4)\in(\ZZ_{\ge0})^4~\middle|~\sum_i\kappa_i=4\text{ and at least two of the }\kappa_i\text{ are }0\right\}.
\]
Associated to any $\lambda\in(\RR_{>0})^{\Xi_0}$, 
there is a K\"ahler form $\omega_\lambda$ on $\check{X}_0$
(\cite[subsection 1.3]{SS1} for details). 
For any $d\in\Lambda^{\Xi_0}$, 
let $W_d(z_1,z_2,z_3,z_4)\in\Lambda[z_1,z_2,z_3,z_4]$ be the weighted homogeneous polynomial of degree $4$ defined by
\[
W_d(z_1,z_2,z_3,z_4):=-z_1z_2z_3z_4+\sum_{\kappa\in\Xi_0}d_{\kappa}~z_1^{\kappa_1}z_2^{\kappa_2}z_3^{\kappa_3}z_4^{\kappa_4}. 
\]
We define the quartic K3 surface $X_d$ over $\Lambda$ by $X_d:=\{W_d=0\}\subset\PP^3_\Lambda$.  
\begin{thm}[{\cite[Theorem C]{SS1} and \cite[Proposition 6.13 and Example 7.1]{SS2}}]\label{HMS-quartic}
There exists $\lambda\in(\RR_{>0})^{\Xi_0}$ and $d\in\Lambda^{\Xi_0}$ with $\lambda_{\kappa}={\rm val}(d_{\kappa})$ for all $\kappa\in\Xi_0$, satisfying the followings: 
\begin{enumerate}
\item
$X_d$ has Picard rank one. 
\item
We have 
a quasi-equivalence of $\ZZ$-graded $\Lambda$-linear c-unital $A_{\infty}$ categories
\[
{\mathfrak D}{\rm Fuk}(\check{X}_0,\omega_\lambda)\simeq\per_{\mathrm{dg}}(X_d).
\]
\end{enumerate}
\end{thm}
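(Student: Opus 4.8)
The plan is to assemble the statement from the two cited results, noting first that $(\check{X}_0, \{X_d\}_d)$ is precisely the Greene--Plesser mirror pair attached to the quartic. The surfaces $X_d = \{W_d = 0\}$ are the members of the Greene--Plesser family --- deformations of the monomial $-z_1z_2z_3z_4$ by the monomials indexed by $\Xi_0$ --- and $\check{X}_0$ is the crepant resolution of the corresponding finite quotient of the Fermat quartic. With this identification, part (2) is the direct application of \cite[Theorem C]{SS1}, which produces a quasi-equivalence of $\ZZ$-graded $\Lambda$-linear c-unital $A_\infty$ categories between the split-closed derived Fukaya category on the symplectic side and $\per_{\mathrm{dg}}$ on the complex side, once the K\"ahler parameter $\lambda$ and the complex parameter $d$ are matched through the non-archimedean mirror map. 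This matching is exactly the tropical condition $\lambda_\kappa = {\rm val}(d_\kappa)$: the K\"ahler areas are recovered as the valuations of the coefficients of the defining equation, which is the large-volume / large-complex-structure regime in which the equivalence is proved.

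For part (1) I would invoke \cite[Proposition 6.13 and Example 7.1]{SS2} via the following lattice matching. For a suitably generic (ambient-irrational) K\"ahler form $\omega_\lambda$, no nonzero integral N\'eron--Severi class of $\check{X}_0$ is orthogonal to $[\omega_\lambda]$, so the lattice $\NN(\check{X}_0, \omega_\lambda) = [\omega_\lambda]^\perp \cap H^2(\check{X}_0, \ZZ)$ coincides with the transcendental lattice $T(\check{X}_0)$, of rank three and signature $(2,1)$. Under the mirror correspondence this symplectic lattice is matched with the algebraic Mukai lattice $\NN(X_d) = \ZZ \oplus \mathrm{NS}(X_d) \oplus \ZZ$, whose rank equals $2 + \mathrm{rk}\,\mathrm{NS}(X_d)$; equating ranks forces $\mathrm{rk}\,\mathrm{NS}(X_d) = 1$, that is, $X_d$ has Picard rank one. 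Carrying out this identification for the quartic family is the content of the cited computation.

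The step I would treat most carefully, and the only genuine obstacle, is the simultaneous realizability of (1) and (2): one must exhibit a single pair $(\lambda, d)$ for which the equivalence of \cite{SS1} holds, for which $\omega_\lambda$ is ambient-irrational, and which satisfies the valuation constraint $\lambda_\kappa = {\rm val}(d_\kappa)$. These conditions are compatible because the constraint fixes only the leading exponents of the Novikov coefficients $d_\kappa$ while leaving their leading coefficients and higher-order terms free, and it permits $\lambda$ to be chosen generically; within this remaining freedom, the $d$ that force an extra algebraic class on $X_d$ --- equivalently, that place $X_d$ on the discriminant of some $(-2)$-class --- form a proper closed subset. Hence a generic choice compatible with the prescribed valuations meets all requirements at once, and, granting the two cited theorems, this completes the proof.
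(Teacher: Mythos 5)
Your reading of part (2) is consistent with the intended assembly: the paper gives no argument of its own here (the theorem is imported wholesale from the citations), and part (2) is indeed just \cite[Theorem C]{SS1} for the quartic Greene--Plesser pair, with $\lambda_\kappa={\rm val}(d_\kappa)$ the valuation-level mirror map. The problems are in your proof of part (1) and in your compatibility argument, and both are genuine gaps. For part (1): the quasi-equivalence of part (2) gives you an isomorphism of \emph{abstract} lattices $K_{\mathrm{num}}({\mathfrak D}{\rm Fuk}(\check{X}_0,\omega_\lambda))\cong K_{\mathrm{num}}(X_d)=\NN(X_d)$; it does \emph{not} identify the left-hand side with the geometric lattice $\NN(\check{X}_0,\omega_\lambda)=[\omega_\lambda]^\perp\cap H^2(\check{X}_0,\ZZ)$. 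Computing $K_{\mathrm{num}}$ of the Fukaya category symplectically (which classes are realized by objects, including idempotent summands whose $HH_0$-classes need not be rational combinations of Lagrangian fundamental classes) is essentially as hard as the statement you want; in \cite{SS2} the lattice identification $\NN(\check{X}_0,\omega_\lambda)\cong\NN(X_d)$ is obtained by computing both sides (each is $U\oplus\langle 4\rangle$) \emph{after} Picard rank one is known, since Picard rank one is what makes Bayer--Bridgeland applicable. So your rank-matching argument is circular relative to the cited results. Your lattice bookkeeping is also incorrect: $\check{X}_0$ has Picard rank $20$, while the ambient K\"ahler classes $\omega_\lambda$ sweep only a rank-$19$ subspace of $\mathrm{NS}(\check{X}_0)\otimes\RR$, so there \emph{always} exist nonzero integral N\'eron--Severi classes orthogonal to $[\omega_\lambda]$; ambient-irrationality means $[\omega_\lambda]$ kills no nonzero class of the rank-$19$ ambient lattice, and $\NN(\check{X}_0,\omega_\lambda)$ is then the rank-$3$ orthogonal complement of that ambient lattice, which strictly contains the rank-$2$ transcendental lattice of $\check{X}_0$ (it is not equal to it, as you assert).

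The second gap is the step you yourself flag as the only obstacle. You resolve it by treating $d$ as a free parameter ("leading coefficients and higher-order terms free") and choosing it off "a proper closed subset". But \cite[Theorem C]{SS1} does not let you choose $d$: the versality argument there produces \emph{some} unspecified $d$ with ${\rm val}(d_\kappa)=\lambda_\kappa$, and you cannot perturb $d$ while retaining the quasi-equivalence; moreover the Noether--Lefschetz locus is a countable union of divisors, not a single closed subset. The actual mechanism of \cite[Proposition 6.13, Example 7.1]{SS2} places all the genericity on $\lambda$: one chooses $\lambda$ \emph{irrational} (compatibly with the hypotheses of \cite{SS1}, which is possible), and then \emph{every} $d$ with ${\rm val}(d_\kappa)=\lambda_\kappa$ --- in particular the one the HMS theorem hands you --- gives $X_d$ of Picard rank one, because an extra algebraic class would force $d$ into a Noether--Lefschetz divisor defined over the trivially valued subfield, whose tropicalization is a rational polyhedral set that an irrational $\lambda$ avoids. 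This transfer of genericity from $d$ (which is not free) to $\lambda$ (which is) is the missing idea, and without it your argument does not yield a single pair $(\lambda,d)$ satisfying (1) and (2) simultaneously.
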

\subsection{Symplectic Torelli mapping class of positive categorical entropy}
We firstly recall an example of an autoequivalence not satisfying the Gromov--Yomdin type equality due to the second author. 

Let $X$ be a complex algebraic K3 surface, and $\O_X(1)$ a very ample line bundle on $X$ such that $c_1(\O_X(1))^2=2d~(d\geq2)$. 
Define
\[
\Phi_0:=(T_{\O_X}\circ(-\otimes\O_X(-1)))^4\in\Aut(\D^b(X)),
\]
where $T_{\O_X}$ is the spherical twist with respect to $\O_X$ .
We then have the following inequality (\cite[Proposition 4.3]{Ouc})
\begin{equation}\label{Ouc-counterexamp}
h_{\mathrm{cat}}(\Phi_0)>\log\rho(K_{\mathrm{num}}(\Phi_0))=\log\rho(\Phi_0^H)=\log\rho((\Phi_0)^{HH_*})(\ge0), 
\end{equation}


For simplicity, we define 
\[
h_{\mathrm{cat}}(\phi):=h_{\mathrm{cat}}(H^0(\phi_*))
\]
for any $\phi\in\pi_0{\rm Symp}(\check{X},\omega)$ via the morphism from (\ref{sMCG-Fuk})
\[
\pi_0{\rm Symp}(\check{X},\omega)
\xrightarrow[(\ref{sMCG-Fuk})]{}\Aut_{CY}({\mathfrak D}{\rm Fuk}(\check{X},\omega))/[2]
\xrightarrow[H^0]{}\Aut(\D{\rm Fuk}(\check{X},\omega))/[2].
\]
Note that $h_{\mathrm{cat}}([\Phi])$ for $[\Phi]\in\Aut(\D{\rm Fuk}(\check{X},\omega))/[2]$ is well-defined by $h_{\mathrm{cat}}(\Phi\circ[2m])=h_{\mathrm{cat}}(\Phi)$ for all $m\in\ZZ$. 

The following is one of the main theorems in this paper. 
\begin{thm}\label{exist-sMCG}
Let $(\check{X_0},\omega_\lambda)$ be the mirror quartic as in Theorem \ref{HMS-quartic}. 
Then there exists a symplectic Torelli mapping class $\phi$ of $(\check{X_0},\omega_\lambda)$ such that 
\begin{equation*}
h_{\mathrm{cat}}(\phi)>\log\rho(H^2(\phi))=0. 
\end{equation*}
\end{thm}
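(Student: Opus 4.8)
The plan is to transport Ouchi's counterexample $\Phi_0$ across the homological mirror symmetry equivalence of Theorem \ref{HMS-quartic} and then realize it by a symplectic mapping class through the Sheridan--Smith dictionary between symplectic mapping class groups and autoequivalence groups. The crucial preliminary observation is that $\Phi_0$ lies in $\Aut^0$, not merely in $\Aut_{CY}$. Take $X_\CC$ to be a complex model of $X_d$: a complex quartic K3 surface of Picard rank one, so that $c_1(\O_{X_\CC}(1))^2=4$ and Ouchi's construction applies with $d=2\ge 2$. Applying it gives $\Phi_0=(T_{\O_{X_\CC}}\circ(-\otimes\O_{X_\CC}(-1)))^4$. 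A short computation of the induced isometry of the algebraic Mukai lattice $\NN(X_\CC)$ shows that the generator $T_{\O_{X_\CC}}\circ(-\otimes\O_{X_\CC}(-1))$ has characteristic polynomial $(t+1)(t^2+1)$, whose roots are roots of unity of order dividing $4$; hence its fourth power acts as the identity on $\NN(X_\CC)$. Since spherical twists along algebraic spherical objects and line-bundle twists both act trivially on the transcendental lattice, one concludes $\Phi_0^H=\mathrm{id}$, i.e. $\Phi_0\in\Aut^0(\D^b(X_\CC))$, while $h_{\mathrm{cat}}(\Phi_0)>\log\rho(\Phi_0^H)=0$ by (\ref{Ouc-counterexamp}).

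Next I would descend and base-change. By Lemma \ref{cpx-model-aut} there is an autoequivalence $\Phi_0^\Lambda\in\Aut^0(\D^b(X_d))$ corresponding to $\Phi_0$ under $\alpha$, and Theorem \ref{base-change}, applied to the two extensions $\CC\hookleftarrow\bar k\hookrightarrow\Lambda$, yields $h_{\mathrm{cat}}(\Phi_0^\Lambda)=h_{\mathrm{cat}}(\Phi_0)>0$. Transporting $\Phi_0^\Lambda$ through the quasi-equivalence $\per_{\mathrm{dg}}(X_d)\simeq{\mathfrak D}{\rm Fuk}(\check{X}_0,\omega_\lambda)$ of Theorem \ref{HMS-quartic}, and using invariance of categorical entropy under $A_\infty$ quasi-equivalence, produces an element $F_0\in\Aut_{CY}({\mathfrak D}{\rm Fuk}(\check{X}_0,\omega_\lambda))/[2]$ with $h_{\mathrm{cat}}(H^0(F_0))>0$.

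The heart of the argument is to realize $F_0$ by a symplectic Torelli class. Here I would invoke the Sheridan--Smith correspondence: mirror symmetry identifies ${\mathcal M}_{\mathrm{cpx}}(\check{X}_0,\omega_\lambda)$ with the K\"ahler moduli space of $X_d$, and under this identification together with the Bayer--Bridgeland isomorphism $\Aut_{CY}(\D^b(X_d))/[2]\cong\pi_1$ of the K\"ahler moduli, the composite of the symplectic monodromy $\pi_1({\mathcal M}_{\mathrm{cpx}}(\check{X}_0,\omega_\lambda))\to\pi_0{\rm Symp}(\check{X}_0,\omega_\lambda)$ with the homomorphism $\phi\mapsto\phi_*$ of (\ref{sMCG-Fuk}) is identified with the Bayer--Bridgeland isomorphism. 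In particular the image of $\phi\mapsto\phi_*$ contains all of $\Aut_{CY}/[2]$, hence $\Aut^0/[2]$, so there is $\phi\in\pi_0{\rm Symp}(\check{X}_0,\omega_\lambda)$ with $\phi_*=F_0$ modulo even shift. To see $\phi$ is Torelli, note that the action of $\phi$ on $H^*(\check{X}_0,\ZZ)$ corresponds under mirror symmetry to the action of $\Phi_0^\Lambda$ on $HH_*(X_d)\cong H^*(X_d,\CC)$; since $\Phi_0^\Lambda\in\Aut^0$ this action is trivial, forcing $H^2(\phi)=\mathrm{id}$ and hence $\phi\in I(\check{X}_0,\omega_\lambda)$. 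Then $h_{\mathrm{cat}}(\phi)=h_{\mathrm{cat}}(H^0(F_0))=h_{\mathrm{cat}}(\Phi_0^\Lambda)>0=\log\rho(H^2(\phi))$, as required.

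I expect the main obstacle to be precisely this last realization step: extracting from Sheridan--Smith the exact statement that the symplectic monodromy, composed with $\phi\mapsto\phi_*$, surjects onto $\Aut_{CY}/[2]$ and is compatible with cohomological actions across the mirror. It is this compatibility that upgrades ``$\Phi_0^\Lambda$ is realized by \emph{some} mapping class'' to ``$\Phi_0^\Lambda$ is realized by a \emph{Torelli} class,'' and that guarantees $\log\rho(H^2(\phi))=0$ rather than merely $\rho(H^2(\phi))=1$ for a possibly nontrivial monodromy action.
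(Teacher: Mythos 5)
Your overall strategy is the same as the paper's: run Ouchi's counterexample $\Phi_0$ on a complex model, move it to the quartic $X_d$ over $\Lambda$ via Lemma \ref{cpx-model-aut} and Theorem \ref{base-change}, and realize it by a symplectic mapping class through the Sheridan--Smith dictionary. One step is done differently and correctly: your verification that $\Phi_0\in\Aut^0(\D^b(X_\CC))$ is a direct lattice computation, and it checks out — the matrix of $T_{\O_X}\circ(-\otimes\O_X(-1))$ on $\NN(X_\CC)$ has characteristic polynomial $t^3+t^2+t+1=(t+1)(t^2+1)$, whose roots are distinct fourth roots of unity, so its fourth power is the identity on $\NN(X_\CC)$; combined with triviality on the orthogonal complement of $\NN(X_\CC)$ in the Mukai lattice (which has finite index together with $\NN(X_\CC)$), this gives $\Phi_0^H=\mathrm{id}$. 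The paper instead deduces this from Orlov's graded matrix factorization equivalence (\cite[Theorem 3.11]{Orl3}), following \cite{Fan}; your argument is more elementary and self-contained.

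The gap is in your realization step, and it is exactly the obstacle you flagged. You produce some $\phi\in\pi_0{\rm Symp}(\check{X}_0,\omega_\lambda)$ with $\phi_*=F_0$ using surjectivity of (\ref{sMCG-Fuk}) onto $\Aut_{CY}/[2]$ (already a stronger statement than what the paper cites), and then argue a posteriori that $\phi$ is Torelli by asserting that the action of $\phi$ on $H^*(\check{X}_0,\ZZ)$ corresponds, under mirror symmetry, to the action of the mirror autoequivalence on $HH_*(X_d)$. That equivariance of a closed-string comparison $H^*(\check{X}_0,\Lambda)\simeq HH_*(X_d)$ with the mapping-class and autoequivalence actions is not contained in the cited results; it is a substantive Hodge-theoretic consequence of HMS, so the inference from ``$\phi_*$ lies in $\Aut^0$'' to ``$H^2(\phi)=\mathrm{id}$'' does not follow as written. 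The paper sidesteps this entirely: Sheridan--Smith prove (formula (52) in \cite[Proposition 7.8]{SS2}) that the restriction of (\ref{sMCG-Fuk}) to the Torelli group
\[
I(\check{X}_0,\omega_\lambda)\longrightarrow\Aut^0({\mathfrak D}{\rm Fuk}(\check{X}_0,\omega_\lambda))/[2]\simeq\Aut^0(\D^b(X_d))/[2]
\]
is surjective, so applying this to $\alpha(\Phi_0)\in\Aut^0(\D^b(X_d))$ yields a class $\phi$ that is Torelli by construction; then $\log\rho(H^2(\phi))=0$ is immediate and $h_{\mathrm{cat}}(\phi)=h_{\mathrm{cat}}(\alpha(\Phi_0))>0$. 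In other words, the ``compatibility'' you were worried about is resolved in \cite{SS2}, but lattice-theoretically inside their monodromy argument rather than via a Hochschild-homology comparison; replacing your last step by that single citation closes your proof.
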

\begin{pf}
Let $X=X_d$ be the quartic K3 surface over $\Lambda$ homologically mirror to $(\check{X},\omega)=(\check{X_0},\omega_\lambda)$ as in Theorem \ref{HMS-quartic}, 
and $X_\CC$ be a complex model of $X$ which is also a quartic over $\CC$. 

By 
(\ref{Ouc-counterexamp}), 
we have $h_{\mathrm{cat}}(\Phi_0)>0$. 
It follows that $\Phi_0$ acts on $H^*(X_\CC,\ZZ)(\simeq HH_*(X_\CC))$ trivially (cf. \cite[Proof in Proposition 4.1]{Fan}) 
from the equivalence to the category of graded matrix factorizations
due to Orlov (\cite[Theorem 3.11]{Orl3}). 
Recall that there are group isomorphisms (cf. Lemma \ref{cpx-model-aut})
\[
\alpha:~\Aut(\D^b(X_\CC))\xleftarrow[(~)_\CC]{\sim}\Aut(\D^b(X'\times_{k}\bar{k}))\xrightarrow[(~)_\Lambda]{\sim}\Aut(\D^b(X));~\Phi\mapsto\alpha(\Phi)
\]
Note that 
the categorical entropy is invariant under base changes for field extensions by Theorem \ref{base-change}, and that the above isomorphism induces $\Aut^0(\D^b(X_\CC))\simeq\Aut^0(\D^b(X))$ (cf. Lemma \ref{cpx-model-aut}). 
We therefore have 
$h_{\mathrm{cat}}(\alpha(\Phi_0))>0$ and $\alpha(\Phi_0)\in\Aut^0(\D^b(X))$. 

Using the Bayer--Bridgeland theorem \cite{BB}
and the symplectic monodromy, 
Sheridan--Smith proved that the morphism obtained by the restriction of (\ref{sMCG-Fuk})
\[
I(\check{X},\omega)
\xrightarrow[(\ref{sMCG-Fuk})]{}\Aut^0({\mathfrak D}{\rm Fuk}(\check{X},\omega))/[2]
\xrightarrow[\text{HMS}]{\sim}\Aut^0(\per_{\mathrm{dg}}(X))/[2]
\xrightarrow[H^0]{\sim}\Aut^0(\D^b(X))/[2]
\]
is surjective \cite[The formula (52) in Proposition 7.8]{SS2}. 
Therefore there exists a symplectic mapping class $\phi$ in the symplectic Torelli subgroup $I(\check{X},\omega)$. 
Clearly, $\phi$ acts on $H^2(\check{X},\ZZ)$ trivially, so that $\log\rho(H^2(\phi))=0$.  
By the definition of 
$h_{\mathrm{cat}}(\phi)$, we have 
$h_{\mathrm{cat}}(\phi)>0$, which completes the proof. 
\qed
\end{pf}
\section{Outlook}


Associated to a symplectic mapping class of a symplectic K3 surface,  
there is the asymptotic invariant called the {\it Floer-theoretic entropy}, which is defined to be the exponential growth rate of the dimensions of its fixed-point Floer cohomology. 
In this section, we introduce the Floer-theoretic entropy and give a question about the relation to the Hochschild entropy and a strategy to prove it. 
\subsection{Floer-theoretic entropy}
For simplicity, we suppose that $(\check{X},\omega)$ is a symplectic K3 surface. 
For a (graded) symplectic mapping class $\phi$, 
we can define a $\ZZ$-graded $\Lambda$-linear space $HF^*(\phi)$ called the {\it fixed-point Floer cohomology} 
of $\phi$ (\cite{DS}, see also \cite[Subsection (3a)]{Sei2}). 
\begin{dfn}[{\cite[Section 6]{Fel2} and \cite[Subsection 1.3, 2.4]{Smi1}}]
For a symplectic mapping class $\phi\in\pi_0{\rm Symp}(\check{X},\omega)$, 
the {\it Floer-theoretic entropy} of $\phi$ is defined by
\[
h_{\mathrm{Floer}}(\phi)
:=\limsup_{n\to\infty}\frac{1}{n}\log\dim_\Lambda HF^*(\phi^n). 
\]
\end{dfn}
There are a few examples whose Floer-theoretic entropy can be computed. 
As an application, Smith used the Floer-theoretic entropy
to show the faithfulness of a representation on symplectic mapping class groups of representation varieties (\cite[Subsection 1.3, 2.4]{Smi1}). 
The Floer-theoretic entropy is also closely related to the radius of convergence of the symplectic zeta function (\cite{Fel1,Fel2,Fel3,Fel4}). 
\subsection{Relation to the Hochschild entropy}
Let $(\check{X},\omega)$ be a symplectic K3 surface. 
For a symplectic mapping class $\phi\in\pi_0{\rm Symp}(\check{X},\omega)$, there are two notions of entropy i.e. the Floer-theoretic entropy and the categorical entropy, which are expected to be related via the homological mirror equivalence. 
As in Theorem \ref{HH-cat}, 
the categorical entropy is equal to or greater than the Hochschild entropy, so we then have to compare the Floer-theoretic entropy and the Hochschild entropy. 
\begin{question}\label{Floer-HH}
Let $(\check{X},\omega)$ be a symplectic K3 surface. 
For a symplectic mapping class $\phi\in\pi_0{\rm Symp}(\check{X},\omega)$, 
does the equation
\[
h_{\mathrm{Floer}}(\phi)=h_{HH^*}(\phi_*)
\]
hold?
\end{question}
All elements of finite order clearly satisfy the above equation. 
We hope that Question \ref{Floer-HH} is affirmative in general, and here propose a strategy to prove it. 
Firstly, two entropy is defined as the asymptotics of dimensions of some $\Lambda$-linear spaces:
\[
HF^*(\phi)~and~
HH^*(\phi_*)
\]
so it suffices to show linear isomorphisms between them as follows. 

\vspace{0.3cm}

\begin{enumerate}
\item[{\it Step~1}.]
For a symplectomorphism $\psi\in{\rm Symp}(\check{X},\omega)$, 
its graph $\Gamma_\psi:=\{(x,\psi(x))\}$
is a Lagrangian submanifold of $(\check{X}\times \check{X}, \mathrm{pr}_1^*\omega-\mathrm{pr}_2^*\omega)$, where $\mathrm{pr}_i: \check{X}\times \check{X}\to \check{X}$ is the projection onto the $i$-th component. 
Let $\Delta:=\Gamma_{\mathrm{id}_{\check{X}}}$ be the diagonal. 
Then for a symplectic mapping class $\phi:=[\psi]\in\pi_0{\rm Symp}(\check{X},\omega)$, show
\[
HF^*(\phi)
\simeq HF^*(\Gamma_\psi, \Delta)
:=H^*(CF(\Gamma_\psi,\Delta)).
\]
This is true for the case of non-degenerate fixed points (\cite[Example 3.8]{Sei2}).


\vspace{0.2cm}

\item[{\it Step~2}.]
Let $\mathfrak{F}:={\rm Fuk}(\check{X},\omega)$. 
Show
\[
H^*(CF(\Gamma_\psi,\Delta))
\simeq H^*(hom_{fun(\mathfrak{F}, \mathfrak{F})}(\phi_*, {\rm id}_\mathfrak{F})), 
\]
where we recall that $fun(\mathfrak{F}, \mathfrak{F})$ is the $A_\infty$ category of c-unital $A_\infty$ endofunctor of $\mathfrak{F}$ (see \ref{A_infty-ver}). 
To show, we need to consider the fully-faithfulness of the Ma'u--Wehrheim--Woodward-type functor (\cite[Theorem 1.1]{MWW}), which is related to whether the diagonal 
$\Delta$ is split-generated by products of Lagrangians in $\check{X   }$. 
Fukaya proved the existence of some variant of this functor in a more general setting (\cite[Corollary 7.4]{Fuk}). 

For monotone symplectic manifolds, 
Smith showed this isomorphism for a symplectomorphism represented by compositions of Dehn twists in some vanishing cycles (\cite[Corollary 3.12]{Smi1}). 


\vspace{0.2cm}

\item[{\it Step~3}.]
Let
$\mathfrak{DF}:={\mathfrak D}{\rm Fuk}(\check{X},\omega)$. Show
\[
H^*(hom_{fun(\mathfrak{F},\mathfrak{F})}(\phi_*, {\rm id}_\mathfrak{F}))
\simeq H^*(hom_{fun(\mathfrak{DF}, \mathfrak{DF})}(\phi_*, {\rm id}_\mathfrak{DF}))
(\simeq HH^*(\phi_*)). 
\]
When $\phi=[{\rm id}_{\check{X}}]$, this isomorphism is a well-known fact that the Hochschild cohomology of an $A_\infty$ category is invariant under passing to a split-closed triangulated envelope. 




\end{enumerate}

\vspace{0.5cm}

\noindent
If $h_{HH^*}(\widetilde{\Phi_0})=h_{\mathrm{cat}}(\Phi_0)$ holds and Question \ref{Floer-HH} is affirmative for the symplectic mapping class $\phi$ appeared in Theorem \ref{exist-sMCG}, 
then $\phi$ gives a Floer-theoretic and higher-dimensional analogue of (real) surface's Torelli mapping classes of positive topological entropy 
(\cite[Corollary in Section 6]{Thu}). 


\end{document}